\newcommand{\TT}[1]{\mathbb T_{#1}}
\newcommand{\rec}{{\talloblong}}
\newtheorem{lemma}{Lemma}
\newtheorem{theorem}[lemma]{Theorem}
\newtheorem{proposition}[lemma]{Proposition}
\newtheorem{conjecture}[lemma]{Conjecture}
\newtheorem{conjecturethree}{Conjecture}
\newcommand{\plussign}{$ \begin{minipage}{1.7em}\begin{tikzpicture}[thick]
\draw (0,.3)--(.6,.3);
\draw (.3,0)--(.3,.6);
\end{tikzpicture}\end{minipage}$ }
\newcommand{\leftplus}{$ \begin{minipage}{1.7em}\begin{tikzpicture}[thick]
\draw (0,.4)--(.3,.4);
\draw (.3,.2)--(.6,.2);
\draw (.3,0)--(.3,.6);
\end{tikzpicture}\end{minipage}$ }
\newcommand{\rightplus}{$ \begin{minipage}{1.7em}\begin{tikzpicture}[thick]
\draw (.3,.4)--(.6,.4);
\draw (0,.2)--(.3,.2);
\draw (.3,0)--(.3,.6);
\end{tikzpicture}\end{minipage}$ }
\newcommand{\Out}{\operatorname{Out}}
\newcommand{\im}{\operatorname{im}}
\newcommand{\z}{\mathbb Z_2}
\title{On the number of tilings of a square by rectangles}
\author{Jim Conant}
\author{Tim Michaels}
\begin{document}
\begin{abstract}
We develop a recursive formula for counting the number of  rectangulations of a square, i.e the number of combinatorially distinct tilings  of a square by rectangles.  Our formula specializes to give a formula counting generic rectangulations, as analyzed by Reading in \cite{reading}. Our computations agree with \cite{reading} as far as was calculated and extend to the non-generic case. An interesting feature of the number of rectangulations is that it  appears to have an $8$-fold periodicity modulo $2$. We verify this periodicity for small values of $n$, but the general result remains elusive, perhaps hinting at some unseen structure on the space of rectangulations, analogous to Reading's discovery that generic rectangulations are in 1-1 correspondence with a certain class of permutations.
Finally, we use discrete Morse theory to show that the space of tilings by $\leq n$ rectangles is homotopy-equivalent to a wedge of some number of $(n-1)$-dimensional spheres. Combining this result with formulae for the number of tilings, the exact homotopy type is computed for $n\leq 28$. 
\end{abstract}
\maketitle

\section{Introduction}
Let $\TT{n}$ be the topological space of tilings of a fixed unit square by $\leq n$ rectangles, topologized via the Hausdorff metric applied to the $1$-skeleta of tilings, including the boundary. Our purpose in this paper is to analyze the topology, or more precisely, the homotopy type, of this space. We were motivated to study the topology of $\TT{n}$ after discussions with Ted Stanford, who had been looking at the space of tilings of a torus by rectangles \cite{stanford}.  In Proposition~\ref{prop:cell}, we show that the space $\TT{n}$ has a natural cell structure with cells in correspondence with combinatorial types of tilings (called rectangulations), and is thus akin to many spaces arising in topology with cells parameterized by combinatorial objects.  For example the Grassmannian, the tropical Grassmannian $\mathcal G_{2,n}$, Teichm\"{u}ller space, and Outer space  have cell decompositions by Schubert cells, phylogenetic trees, marked ribbon graphs and marked graphs respectively \cite{grass,tropical,teich,outer}. The homotopy type of the latter two, modulo the actions of the mapping class group and $\Out(F_n)$ respectively, remains an unsolved problem in general.

Our first main result 
 uses discrete Morse theory to establish that $\TT{n}$ is a wedge of top-dimensional spheres.

\begin{theorem}\label{thm:wedge}
There is a homotopy equivalence $\TT{n}\simeq \bigvee_{i=1}^{k_n} S^{n-1}$, for some nonnegative integer $k_n$.
\end{theorem}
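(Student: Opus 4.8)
The plan is to run Forman's discrete Morse theory on the (regular) CW structure of $\TT{n}$ furnished by Proposition~\ref{prop:cell}, producing an acyclic matching on its face poset with \emph{no critical cells in dimensions $1$ through $n-2$}. The first step is to pin down cell dimensions. Within a fixed combinatorial type the open cell is swept out by independently sliding the maximal interior segments of the tiling, so its dimension equals the number of such segments; for a rectangulation with $m$ rectangles and $c$ interior four-valent vertices (``$+$''-crossings) this number is $(m-1)-c$. Hence $\dim\TT{n}=n-1$, the unique $0$-cell is the trivial tiling by a single rectangle, and the $(n-1)$-cells are exactly the \emph{generic} tilings by $n$ rectangles (those with $c=0$, $m=n$). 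Dually, the codimension of a cell inside the top stratum is $(n-m)+c$, the total number of its two kinds of ``defects'': a deficiency $n-m$ of rectangles and a number $c$ of crossings. In particular every non-top cell has at least one defect.

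Granting the matching, the conclusion is quick. By the fundamental theorem of discrete Morse theory $\TT{n}$ is homotopy equivalent to a CW complex with cells only in dimensions $0$ and $n-1$. Since $\TT{n}$ is connected and has a \emph{unique} $0$-cell, that cell must be critical (one cannot delete the last vertex), giving a single $0$-cell and $k_n$ critical $(n-1)$-cells. As each $(n-1)$-cell is then attached along a map $S^{n-2}\to\{\mathrm{pt}\}$, which is constant, the resulting complex is literally $\bigvee_{i=1}^{k_n} S^{n-1}$. (Alternatively, the matching shows $\TT{n}$ is $(n-2)$-connected with free top homology, and a $(d-1)$-connected $d$-dimensional CW complex is a wedge of $d$-spheres; for $n\le 2$ I would check the statement directly, e.g.\ $\TT{2}\simeq S^1\vee S^1$.)

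For the matching itself, the idea is to cancel each defect by a canonical local move that raises dimension by one: resolving a $+$-crossing into a pair of $T$-junctions (which increases the segment count while preserving $m$), or inserting a single new wall that splits one rectangle (which increases both $m$ and the segment count). I would fix once and for all a total order on possible defect \emph{sites} — say, reading order from the bottom-left corner of the square — and, for each cell of dimension in $[1,n-2]$, pair it with the cell obtained by toggling its configuration at the \emph{first} site at which such a move is admissible, taking care that the toggle is a genuine involution: a cell with a defect at its first site is matched upward by removing it, and the resulting cell is matched downward precisely when that same site is the one its own first-defect rule would reintroduce. Because every non-top cell has a defect, no cell of dimension $\le n-2$ is left unmatched, so the only survivors are the trivial tiling and the generic $(n-1)$-cells.

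The main obstacle is to make this matching both \emph{consistent} and \emph{acyclic}. Consistency is delicate for cells carrying \emph{both} a crossing and a deficiency, where ``remove the first defect'' and ``add the first defect'' must agree on a single partner so that the pairing is an honest involution; the asymmetry between the two move types (resolving a crossing versus inserting a wall) is exactly what must be reconciled here. Acyclicity is the harder point: one must rule out closed directed paths in the modified Hasse diagram, which requires exhibiting a well-founded monovariant — for instance a lexicographic weight recording the ordered list of defect sites — that strictly decreases under every elementary move, notwithstanding the \emph{non-local} effect that sliding a wall or resolving a crossing can have on the rest of the tiling. Establishing such a monovariant, and thereby the acyclicity of the matching, is the real content behind Theorem~\ref{thm:wedge}.
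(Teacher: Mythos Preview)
Your proposal is a strategy, not a proof: you explicitly concede in the final paragraph that neither the consistency of your matching (the involution property) nor its acyclicity has been established, and you call this ``the real content behind Theorem~\ref{thm:wedge}.'' That is precisely the gap. Moreover, the matching itself is underspecified. You speak of toggling at the ``first defect site,'' but a \emph{deficiency} $n-m$ is not localized at any site: inserting a wall requires choosing a rectangle and an orientation, and there is no canonical way to say which potential wall is ``first'' that interacts well with the crossing sites. Even granting some ordering, your toggle is not obviously an involution: after inserting a wall you must argue that the new cell's own first-site rule deletes that very wall, yet every interior wall is contractible and many of them precede the one you inserted. The asymmetry you flag between the two move types is not a technicality---it is the whole difficulty, and you have not resolved it.

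The paper avoids this by separating the two defect types into two stages and using very different mechanisms for each. First it matches on rectangle count alone: write any rectangulation uniquely as $\rec^k S$ (where $\rec$ prepends a thin vertical strip on the left and $S$ has no such strip), and pair $(\rec^{2i}S,\rec^{2i+1}S)$. This is manifestly an involution, and acyclicity follows because the number of leading strips strictly increases along any gradient path. The surviving critical cells all have $n$ tiles but may still carry singular vertices; a second matching then pairs each such cell with the one obtained by resolving (or creating) a crossing at the unique furthest-right, highest $+$-site, turning \plussign into \leftplus. Acyclicity of the combined field is argued by first showing a gradient loop cannot mix the two kinds of vectors, and then, within the second kind, tracking the number of vertical walls and the vertical position of the active site. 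The point is that each stage has its own simple monovariant; your single-stage ``first defect'' scheme tries to merge them and thereby loses both.
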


Thus knowledge of the numbers $k_n$ is necessary to complete the calculation of the homotopy type of $\TT{n}$.  These are listed for $n\leq 28$ in Proposition~\ref{prop:kn} below.

Two tilings of a unit square by rectangles are said to be \emph{combinatorially equivalent} if there is a homeomorphism taking one to the other that fixes the four corners and which preserves the horizontality and verticality of line segments. Following \cite{reading}, we let a \emph{rectangulation} be a combinatorial equivalence class of tilings. In an earlier version of this paper, we dropped the requirement  that the homeomorphism preserves horizontality and verticality, which leads to a different definition (not the one we had in mind) as the following example shows.
$$
\begin{minipage}{1.6cm}
\begin{tikzpicture}[thick,scale=.5]
\pgfsetfillopacity{0.3}
\draw[fill=brown](0,0)--(0,3)--(3,3)--(3,0)--(0,0);
\draw(2,0)--(2,1.5)--(3,1.5);
\draw(0,1.5)--(1,1.5)--(1,3);
\draw(1,1.5)--(2,1.5);
\end{tikzpicture}
\end{minipage}
\cong
\begin{minipage}{1.6cm}
\begin{tikzpicture}[thick,scale=.5]
\pgfsetfillopacity{0.3}
\draw[fill=brown](0,0)--(0,3)--(3,3)--(3,0)--(0,0);
\draw(2,0)--(2,1)--(3,1);
\draw(0,2)--(1,2)--(1,3);
\draw(1,2)--(2,1);
\end{tikzpicture}
\end{minipage}
\cong
\begin{minipage}{1.6cm}
\begin{tikzpicture}[thick,scale=.5,rotate=90]
\pgfsetfillopacity{0.3}
\draw[fill=brown](0,0)--(0,3)--(3,3)--(3,0)--(0,0);
\draw(1,0)--(1,1.5)--(0,1.5);
\draw(2,3)--(2,1.5)--(3,1.5);
\draw(1,1.5)--(2,1.5);
\end{tikzpicture}
\end{minipage}
$$

As we mentioned above, the space $\TT{n}$ is a cell complex, where the cells are in 1-1 correspondence with rectangulations. A vertex of a rectangulation where four rectangles meet in a corner is called \emph{singular}. The dimension of a cell corresponding to a rectangulation with $m$ tiles and $s$ singular vertices is $m-s-1$; see Proposition~\ref{prop:cell}. So, using the Euler characteristic,  the numbers $k_n$ are determined by the numbers of cells in each dimension. This amounts to being able to count the number $t_{m,s}$ of rectangulations with $m$ tiles and $s$ singular vertices. A closed formula for this would be nice and is still an open problem. In \cite{reading}, the numbers $t_{m,0}$ are shown to count a certain kind of permutation of $m$ letters, called \emph{$2$-clumped permutations.} In particular, the striking fact that $t_{1,0}=1,t_{2,0}=2,t_{3,0}=6,$ and $t_{4,0}=24$ arises since the first  permutations which are not $2$-clumped occur when $m=5$. In any event, Reading's beautiful result does not easily give a formula for $t_{m,0}$ and also does not apply to $t_{m,s}$ for $s\geq 1$. In this paper, we give a recursive formula to calculate $t_{m,s}$ which is easily implemented on a computer, and which we used to calculate these numbers for $m\leq 28$.

More precisely, we need to refine $t_{m,s}$ by also specifying how many edges hit the interior of the right side of the unit square. We say that a rectangulation is $r$-heavy if there are $r$ such edges.
 Let $t_{m,r,s}$ be the number of $r$-heavy rectangulations with $m$ tiles and $s$ singular vertices.

\begin{theorem}\label{thm:recursion}
The number of $r$-heavy rectangulations with $m$ tiles and $s$ singular vertices is given by the formula
$$
t_{m,r,s}=\sum_{\bar m,\bar r, \bar s, c}
(-1)^{c+1}\binom{\ell-1}{c-1} 
\binom{{\bar r}+2-\ell}{c}
\binom{\ell-c}{{\Delta s}}\binom{{\Delta m}-c-{\Delta s}+\ell-1}{\ell-1}
t_{{\bar m},{\bar r},{\bar s}}
$$
where the sum ranges over the set of $(\bar m,\bar r,\bar s,c)$ such that  $1\leq {\bar m}\leq{m-1}$, $0\leq {\bar r}\leq{\bar m-1}$, $0\leq{\bar s}\leq {s}$ and $1\leq {c}\le{\lceil({\bar r}+1)/2\rceil}$.
Here ${\Delta s}=s-{\bar s}$, $\Delta m=m-{\bar m}$, $\Delta r=r-\bar r$, and $\ell=\Delta m-\Delta r$.

The base of the recursion is given by $t_{k,k-1,0}=1$ for $k\geq 1$.
\end{theorem}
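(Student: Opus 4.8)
The plan is to prove the formula by a \emph{rightmost-column} decomposition. I would set up a correspondence between an $r$-heavy rectangulation $R$ with $m$ tiles and $s$ singular vertices and a pair consisting of a smaller rectangulation $\bar R$ together with combinatorial attaching data that records how a new column of rectangles is glued onto the right edge of $\bar R$. Here $\bar R$ is obtained from $R$ by deleting the newly added rectangles meeting the right edge and extending the surviving right-edge rectangles rightward to $x=1$; conversely $R$ is rebuilt from $\bar R$ by selecting $\ell$ of its $\bar r+1$ right-edge rectangles to be \emph{active}, shrinking them, and stacking new rectangles to their right while leaving the remaining $\bar r+1-\ell$ \emph{inactive} rectangles to run straight out to $x=1$. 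The first task is to check that this operation is well defined and that the parameters transform as stated: since every new rectangle meets the right edge, the $\bar r+1-\ell$ inactive rectangles together with the $\Delta m$ new ones account for all $r+1$ right-edge rectangles of $R$, which forces $r+1=(\bar r+1-\ell)+\Delta m$, i.e. $\ell=\Delta m-\Delta r$; and since deleting the column destroys exactly the $\Delta s$ crossings created during the attachment, $\bar s=s-\Delta s$. The bounds $\bar r\le\bar m-1$ and $\bar s\le s$ are then immediate.

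Next I would count the attaching data for a fixed active set whose $\ell$ rectangles form $c$ maximal runs of consecutive right-edge rectangles of $\bar R$, and show it contributes $\binom{\ell-c}{\Delta s}\binom{\Delta m-c-\Delta s+\ell-1}{\ell-1}$. A run of length $k$ carries $k-1$ interior interface points, for a total of $\ell-c$; at each such point the attached stack either crosses it (producing a singular vertex) or does not, so choosing the $\Delta s$ singular points gives $\binom{\ell-c}{\Delta s}$ by the Vandermonde identity $\sum\prod_j\binom{k_j-1}{\sigma_j}=\binom{\ell-c}{\Delta s}$. The remaining new rectangles are cut by \emph{free} horizontal segments distributed among the $\ell$ active rectangles; convolving the per-run generating functions $\prod_j(1-x)^{-k_j}=(1-x)^{-\ell}$ shows the number of distributions of the $\Delta m-c-\Delta s$ free cuts is $\binom{\Delta m-c-\Delta s+\ell-1}{\ell-1}$, independent of the individual run lengths. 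I would also identify $\binom{\ell-1}{c-1}\binom{\bar r+2-\ell}{c}$ as the number of ways to select $\ell$ of the $\bar r+1$ right-edge rectangles so as to form exactly $c$ maximal runs, and observe that the constraint $c\le\lceil(\bar r+1)/2\rceil$ is precisely the requirement that $c$ runs separated by nonempty gaps fit among $\bar r+1$ positions.

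The crux, and the step I expect to be the main obstacle, is the alternating sign $(-1)^{c+1}$. The forward construction is surjective onto rectangulations of the target type but fails to be injective: a fixed $R$ can arise from the same $\bar R$ under several different choices of active set, because the inactive rectangles separating the runs may be regrouped with their neighbors. I would argue that this over-counting is governed by an inclusion-exclusion over the run structure, so that weighting the (unsigned) decorated counts by the M\"obius sign $(-1)^{c+1}$ and summing over $c$ records each $R$ exactly once. Making this rigorous requires pinning down a canonical inverse --- for instance, decreeing that the peeled column be maximal, equivalently that every active rectangle genuinely receive at least one new rectangle --- and then proving that the fibers of the forward map have the claimed signed cardinality. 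I expect this to come down to a binomial identity collapsing the alternating $c$-sum to the true fiber count; the first genuine cancellations appear as soon as $\bar r\ge2$, when two runs become possible, and I would verify the mechanism by hand there.

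Finally I would handle the base case and assemble the recursion. The equality $t_{k,k-1,0}=1$ holds because a $(k-1)$-heavy rectangulation with $k$ tiles has all $k$ of its tiles meeting the right edge, leaving no room for a tile that does not; hence it is the stack of $k$ horizontal strips, which has no singular vertices, and there is exactly one such combinatorial type. With the decomposition in hand, summing over the type $(\bar m,\bar r,\bar s)$ of $\bar R$ in the ranges $1\le\bar m\le m-1$, $0\le\bar r\le\bar m-1$, $0\le\bar s\le s$, and over the run number $c$, with each $\bar R$ weighted by $t_{\bar m,\bar r,\bar s}$ and the signed attaching count, reproduces the stated formula for $t_{m,r,s}$.
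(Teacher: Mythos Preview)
Your proposal is correct and follows essentially the same route as the paper: what you call the rightmost-column decomposition with ``active'' right-edge rectangles forming $c$ maximal runs is exactly the paper's construction of pushing in $c$ vertical edges of total length $\ell$ from the right, and your three binomial factors are derived from the same combinatorial counts (compositions of $\ell$ into $c$ parts placed among gaps, choice of $\Delta s$ singular interfaces among $\ell-c$, and stars-and-bars for the remaining horizontal cuts). The paper is equally brief about the inclusion--exclusion step you flag as the crux, simply asserting that overcounting is resolved by the alternating sign $(-1)^{c+1}$ without spelling out the M\"obius/fiber argument you sketch.
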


The above recursion yields the  data in Figure~\ref{fig:data}, where $\displaystyle t_{m,s}=\sum_{r=0}^{m-1}t_{m,r,s}$, and $\displaystyle t_m=\sum_{s\geq 0} t_{m,s}$ is  the total number of rectangulations with $m$ tiles.  
\begin{figure}
\begin{center}
\begin{tabular}{c|cccccccccccc}
$m$&1&2&3&4&5&6&7&8&9&10&11&12\\
\hline
$t_{m,0}$&1&2&6&24&116&642&3938&26194&186042&1395008&10948768&89346128\\
$t_{m,1}$&0&0&0&1  & 12  &114&1028&9220 &83540   &768916  & 7200852  &68611560\\
$t_{m,2}$&0&0&0&0  & 0    &2     &48     &770   &10502   &132210  &1593934   &18755516\\
$t_{m,3}$&0&0&0&0  & 0    &0     &0       &10      &348       &7680       &137940     &2206972\\
$t_{m,4}$&0&0&0&0  &0     &0     &0       &0         &1           &104         &4020          &106338\\
$t_{m,5}$&0&0&0&0  &0     &0     &0       &0         &0           &0              &20               &1571\\
$t_{m,6}$&0&0&0&0&  0     &0     &0       &0        &0            &0              &0                  &2\\
\hline
$t_{m}$&1&2&6&25&128&758&5014&36194&280433&2303918&19885534&179028087
\end{tabular}
\end{center}
\caption{A table of values for $t_{m,s}$, the number of rectangulations with $m$ tiles and $s$ singular vertices.}\label{fig:data}
\end{figure}

The sequence $t_m$ continues
\begin{align*}
1, &2, 6, 25, 128, 758, 5014, 36194, 280433, 2303918, 19885534, 179028087, 1671644720,\\
 &16114138846, 159761516110, 1623972412726, 16880442523007, 179026930243822,\\
 & 1933537655138482,  21231023519199575, 236674460790503286, 2675162663681345170,\\
 & 30625903703241927542, 354767977792683552908, 4154708768196322925749, \\
 &49152046198035152483150, 587011110939295781585102, 7072674305834582713614923
\end{align*}

This motivates the following surprising conjecture, which we have been unable to prove, though we give partial results in section~\ref{sec:symmetry}.

\begin{conjecture}\label{conj1}
 $t_n\equiv 1\!\!\mod 2$ if $n=8k+1$ or $n=8k+4$. Otherwise $t_n\equiv 0\!\!\mod 2$.  
\end{conjecture}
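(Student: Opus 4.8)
The plan is to translate the conjecture into a single generating-function identity over $\mathbb{F}_2$ and to attack it through the recursion of Theorem~\ref{thm:recursion}, reduced modulo $2$. Setting $P(x)=\sum_{m\geq 1}t_m x^m\in\mathbb{F}_2[[x]]$, the assertion that $t_m$ is odd exactly when $m\equiv 1,4\pmod 8$ is equivalent to $P(x)=(x+x^4)\sum_{k\geq 0}x^{8k}$, that is, to the functional equation $(1+x^8)P(x)=x+x^4$. Since $1+x^8=(1+x)^8$ in characteristic $2$, this is in turn the statement that $t_m\equiv t_{m-8}\pmod 2$ for all $m\geq 9$, the coefficients $t_1,\dots,t_8$ (with parities $1,0,0,1,0,0,0,0$) being encoded as the inhomogeneous term $x+x^4$. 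So the whole problem reduces to extracting this hidden factor $(1+x)^8$ from the recursion, with Lucas' theorem as the main tool for the binomial coefficients.

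The first substantive step is a modular simplification of the recursion, and here the calculation is clean and favorable. Modulo $2$ the sign $(-1)^{c+1}$ vanishes, and I would first sum out the singular-vertex index. Writing $\Delta m=m-\bar m$ and $\ell=\Delta m-\Delta r$, Chu--Vandermonde gives
$$\sum_{\Delta s\geq 0}\binom{\ell-c}{\Delta s}\binom{\Delta m-c-\Delta s+\ell-1}{\ell-1}=[z^{\ell-1}]\,(1+z)^{\Delta m-1}(2+z)^{\ell-c},$$
where $[z^{\ell-1}]$ extracts the coefficient of $z^{\ell-1}$. The decisive simplification is that $2+z\equiv z\pmod 2$, collapsing the right-hand side to $\binom{\Delta m-1}{c-1}$. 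Consequently the $s$-summed counts $t_{m,r}:=\sum_s t_{m,r,s}$ obey
$$t_{m,r}\equiv\sum_{\bar m,\bar r,c}\binom{\ell-1}{c-1}\binom{\bar r+2-\ell}{c}\binom{\Delta m-1}{c-1}\,t_{\bar m,\bar r}\pmod 2,$$
and a second Vandermonde convolution in $\ell$ (i.e.\ summing over $r$) simplifies the fully $r$-summed recursion to
$$t_m\equiv\sum_{\bar m,\bar r,c}\binom{\Delta m-1}{c-1}\binom{\bar r+2}{2c}\,t_{\bar m,\bar r}\pmod 2.$$

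With these reductions I would form the bivariate series $Q(x,y)=\sum_{\bar m,\bar r}t_{\bar m,\bar r}\,x^{\bar m}y^{\bar r}$ over $\mathbb{F}_2$, so that the legitimate specialization $P(x)=Q(x,1)$ (each $m$ contributes only finitely many $r$) is the object of interest. The convolution in $m$ becomes multiplication by powers of $x$, the base case $t_{k,k-1,0}=1$ fixes the inhomogeneous term, and the goal is to read off $(1+x^8)Q(x,1)=x+x^4$ from the resulting functional equation for $Q$.

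The hard part --- and, I believe, the reason the conjecture has resisted proof --- is the factor $\binom{\bar r+2-\ell}{c}$, which depends on the old right-degree $\bar r$ and not merely on the increments. This is what couples the two gradings: through the identity $\sum_n\binom{n+A}{c}a_n y^n$ it acts on $Q$ by a Hasse derivative (divided-difference operator) in $y$, which is well defined over $\mathbb{F}_2$ even though $\tfrac{1}{c!}$ is not, so the equation for $Q(x,y)$ carries such an operator term rather than being a plain geometric series, and the sought factor $(1+x)^8$ must emerge from its solution. Since the $8$-fold period has no known combinatorial interpretation --- unlike Reading's identification of the $s=0$ case with $2$-clumped permutations \cite{reading} --- I expect that forcing out this factor will require either guessing and verifying a closed form for $t_{m,r}\pmod 2$ against the reduced recursion above, or constructing a parity-reversing involution on $r$-heavy rectangulations away from the residues $1,4\pmod 8$. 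The $s$- and $r$-summations are routine; the $\bar r$-coupling is the genuine obstacle.
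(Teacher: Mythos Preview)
The statement you are attempting is labelled \emph{Conjecture} in the paper, and the authors explicitly say they were unable to prove it; Section~\ref{sec:symmetry} contains only partial results. So there is no proof in the paper to compare your argument against, and your write-up is likewise not a proof: it is a research plan which, as you yourself note in the final paragraph, stalls at the coupling through $\bar r$ and leaves the extraction of the factor $(1+x)^8$ unresolved. That is the genuine gap. Your mod-$2$ reductions up to
\[
t_m\equiv\sum_{\bar m,\bar r,c}\binom{\Delta m-1}{c-1}\binom{\bar r+2}{2c}\,t_{\bar m,\bar r}\pmod 2
\]
appear to be correct (the Vandermonde steps check), but the resulting relation is not a closed recursion for $t_m$; it still requires control of the full bivariate array $t_{\bar m,\bar r}\bmod 2$, and you offer no mechanism for obtaining that control.

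It is worth recording that the paper's partial approach is quite different from yours. Rather than working with the recursion, the authors use the $D_8$ action on rectangulations to reduce $t_n\bmod 2$ to the count $s_n$ of totally symmetric rectangulations, and then analyse those via a fundamental-domain picture. This yields the $4$-periodic statements $s_n=0$ for $n\equiv 2,3\pmod 4$ and $s_{4k+1}=s_{4k+4}$, which are consistent with (but weaker than) the conjectured $8$-periodicity; the refinement from period $4$ to period $8$ is exactly what remains open there. Your generating-function route is a reasonable alternative line of attack, but as presented it does not get further than the paper does.
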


Getting back to the topology,  the calculations of $t_{m,e}$ above can be used to calculate $k_n$:

\begin{proposition}\label{prop:kn}
The sequence $k_n$ referred to in Theorem~\ref{thm:wedge} is given by:
\begin{align*}
0, &2, 4, 19, 85, 445, 2513, 15221, 97436, 653290, 4554620, 32833261,\\
&243633947, 1854129607, 14428437881, 114522981916, 925229661343, \\
&7594812038558, 63246031323436, 533614085123809, 4556201784167013, \\
&39330233695303765, 342938769382591967, 3018115913779272617, \\
&26790754504125156939, 239715620518047835311, 2160879323839557205915, \\
&19614261422949114679816,
\ldots
\end{align*}
for $n\geq 1$.
\end{proposition}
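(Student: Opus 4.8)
The plan is to deduce $k_n$ from the Euler characteristic of $\TT{n}$, whose value is pinned down very rigidly by Theorem~\ref{thm:wedge}. Since $\TT{n}\simeq\bigvee_{i=1}^{k_n}S^{n-1}$, its reduced integral homology is free of rank $k_n$ and concentrated in degree $n-1$, so the (homotopy-invariant) Euler characteristic is
$$
\chi(\TT{n})=1+(-1)^{n-1}k_n .
$$
Hence it suffices to compute $\chi(\TT{n})$ by independent means and then solve for $k_n$.

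First I would compute $\chi(\TT{n})$ directly from the cell structure. By Proposition~\ref{prop:cell}, $\TT{n}$ is a finite CW complex whose cells are in bijection with rectangulations of the square into at most $n$ rectangles, and a rectangulation with $m$ tiles and $s$ singular vertices contributes a single cell of dimension $m-s-1$. Because the Euler characteristic of a finite CW complex equals the alternating sum of its cell counts, grouping the cells by the pair $(m,s)$ gives
$$
\chi(\TT{n})=\sum_{m=1}^{n}\sum_{s\geq 0}(-1)^{m-s-1}\,t_{m,s},
$$
where $t_{m,s}=\sum_{r=0}^{m-1}t_{m,r,s}$. Equating this with the previous display and solving produces the closed expression
$$
k_n=(-1)^{n-1}\Bigl(\sum_{m=1}^{n}\sum_{s\geq 0}(-1)^{m-s-1}t_{m,s}\;-\;1\Bigr).
$$

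To finish, I would feed the recursion of Theorem~\ref{thm:recursion}, with base case $t_{k,k-1,0}=1$, into a computer to tabulate all $t_{m,s}$ for $m\leq 28$ (these are the entries of Figure~\ref{fig:data}, suitably extended), and evaluate the alternating sum for each $n=1,\dots,28$. As a sanity check one verifies the small cases by hand: for $n=2$ one finds $\chi=t_{1,0}-t_{2,0}=1-2=-1$, hence $k_2=2$; for $n=3$, $\chi=1-2+6=5$, hence $k_3=4$; and for $n=4$, $\chi=1-2+6-24+1=-18$, hence $k_4=19$, in agreement with the stated sequence. No genuine conceptual obstacle remains once Theorems~\ref{thm:wedge} and~\ref{thm:recursion} are in hand, so that the proposition is essentially a corollary of the two; the only real care needed is bookkeeping, namely summing over \emph{all} pairs $(m,s)$ with $m\le n$ (and not merely the top-dimensional cells) while correctly tracking the sign $(-1)^{m-s-1}$, together with a correct and sufficiently efficient implementation of the recursion out to $n=28$.
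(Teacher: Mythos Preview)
Your argument is correct and matches the paper's own proof essentially verbatim: compute $\chi(\TT{n})$ from the cell structure of Proposition~\ref{prop:cell} as $\sum_{m=1}^{n}\sum_{s}(-1)^{m-s-1}t_{m,s}$, equate it with $1+(-1)^{n-1}k_n$ from Theorem~\ref{thm:wedge}, and solve for $k_n$. The only addition you make is the explicit mention of feeding Theorem~\ref{thm:recursion} into a computer and the hand-checks for $n=2,3,4$, which the paper leaves implicit.
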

\begin{proof}
By Proposition~\ref{prop:cell}, $\mathbb T_n$ has a cell decomposition with cells in correspondence with rectangulations. The dimension of  a cell corresponding to a rectangulation with $m$ rectangles and $s$ singular points is $m-s-1$. Hence the Euler characteristic $\chi(\mathbb T_n)= \sum_{m=1}^n\sum_s (-1)^{m-s-1}t_{m,s}$. On the other hand $\chi(\bigvee_{i=1}^{k_n}S^{n-1})=1+(-1)^{n-1}k_n$. Hence $k_{n}=(-1)^{n-1}(-1+\sum_{m=1}^n\sum_s (-1)^{m-s-1}t_{m,s})$.
\end{proof}

{\bf Acknowledgment:} We thank Nathan Reading  for a careful reading of the manuscript, and the referee, who found the inequivalent homeomorphic tilings listed above.

\section{Proof of Theorem~\ref{thm:recursion}}

Every rectangular tiling, except ones with only horizontal edges, can be generated from a simpler tiling by the process in Figure~\ref{fig:cont}, where $c=1$. The simpler tiling is pictured in (A). Then one pushes an edge of length $\ell$ in from the right, blocking $\ell-1$ horizontal edges from hitting the right edge, as in (B). One then adds horizontal edges in the newly created box, some of which create singular vertices as in (C), and some of which do not as in (D). However, some tilings may be generated in more than one way from this process. For example, the tiling 
\begin{minipage}{.4in}
    \begin{tikzpicture} [thick]
    \pgfsetfillopacity{.3}
\draw[fill=brown](0,0) -- (1,0)--(1,1)--(0,1)--(0,0);
\draw (0,.33)--(1,.33);
\draw (0,.66)--(1,.66);
\draw (.5,0)--(.5,.33);
\draw (.5,.66)--(.5,1);
\end{tikzpicture}
\end{minipage}
comes from two different simpler tilings. To take care of this we use an inclusion-exclusion argument and write $$t_{m,r,s}=\sum_{c\geq 1}(-1)^{c+1}(\text{\# of ways to push in $c$ edges from the right from a simpler tiling})$$

First we count the ways to push in $c$ lines from the right with total length $\ell$, as in Figure~\ref{fig:cont} (B). (Note that $\ell=\bar r-r+m-\bar m$, because $\Delta\text{Tiles}=\Delta(\text{Right edges})-\ell$.) Since there are $\bar r+1$ available slots on the right, this is the count of the number of $c$-component subsets of $\bar r+1$ with a total length of $\ell$, which by Lemma~\ref{lem1}, is $\binom{\ell-1}{c-1}\binom{\bar r+2-\ell}{c}$.
 Next, we need to create $(s-\bar s)$ singular vertices, and the only way to do this is to put a horizontal line at one of the existing pushed-in horizontal lines, as in (C). There are $\ell-c$ pushed in lines, so there are $\binom{\ell-c}{s-\bar s}$ choices available. Finally, we need to distribute the remaining horizontal edges to get an $m$-tile configuration which is $r$-heavy. The number of bins these new horizontal lines can go to is $\ell$. Each pushed-in component creates a new tile making $c$, and each singular vertex also creates a new tile, making $c+s-\bar s$. So we need to create $m-\bar m-(c+s-\bar s)$ new tiles. Hence we need to count the number of ways to distribute $m-\bar m-c-s+\bar s$ edges into the $\ell$ distinct slots they can go, as in (D).
By Lemma~\ref{lem2}, this is $\binom{m-\bar m-c-s+\bar s+\ell-1}{\ell-1}$. Thus we have accounted for all four factors of the coefficient in the formula.

The limits of the summations are explained as follows. Given an $\bar r$-heavy tiling, one can push in at most $\lceil(\bar r+1)/2\rceil$ edges. The number of tiles in the simpler tiling must be smaller, so $\bar m$ ranges to $m-1$. The number of edges meeting the right may not be smaller in the simpler tiling, but we can at least say it has to be less than the number of tiles $\bar m$. Finally the number of singular vertices must indeed be less than or equal to the number in the more complex tiling. This completes the proof.

In the following lemma, a \emph{component} of a subset  $X\subset\{1,\ldots,\bar r+1\}$ is a maximal set of numbers of the form $\{i,i+1,i+2,\ldots,i+k\}$ contained in $X$.
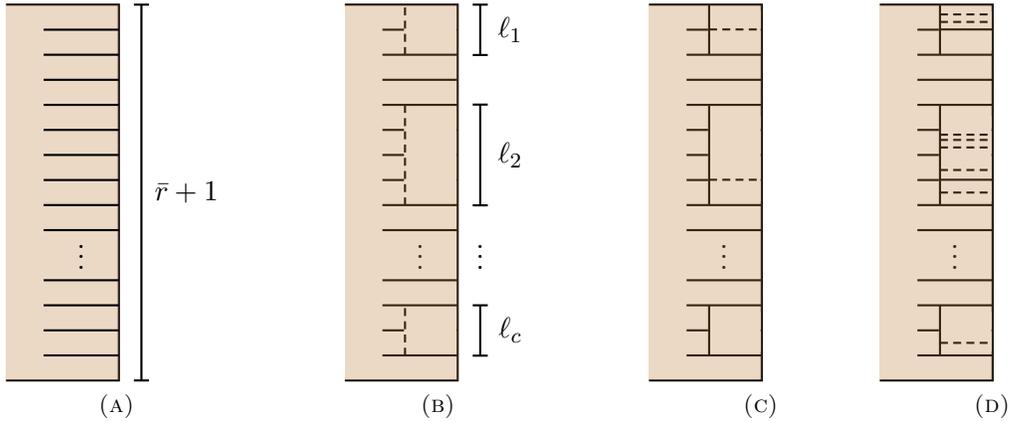
\begin{figure}%
\centering 
\subfloat[][]{\begin{tikzpicture}[thick]
\draw(0,0)--(1.5,0)--(1.5,5)--(0,5);
\fill[color=brown,opacity=.3](0,0)--(1.5,0)--(1.5,5)--(0,5);
\foreach \h in {1,...,9}
{
\draw (.5,5-\h/3)--(1.5,5-\h/3);
};
\foreach \h in {1,...,4}
{
\draw (.5,\h/3)--(1.5,\h/3);
};
\draw (1,1.75) node {$\vdots$};
\draw (1.7,0)--(1.9,0);
\draw (1.7,5)--(1.9,5);
\draw (1.8,0)--(1.8,5);
\draw (2.4,2.5) node {$\bar r+1$};
\end{tikzpicture}}
\qquad \qquad
\subfloat[][]{\begin{tikzpicture}[thick]
\draw (0,0)--(1.5,0)--(1.5,5)--(0,5);
\foreach \h in {1,...,9}
{
\draw (.5,5-\h/3)--(1.5,5-\h/3);
};
\foreach \h in {1,...,4}
{
\draw (.5,\h/3)--(1.5,\h/3);
};
\draw (1,1.75) node {$\vdots$};
\draw[white,fill=white] (.8,.4) rectangle (1.47,.9);
\draw[ densely dashed] (.8,.33)--(.8,1);
\draw (1.7,.33)--(1.9,.33);
\draw (1.7,1)--(1.9,1);
\draw (1.8,.33)--(1.8,1);
\draw (2.2,.67) node {$\ell_c$};
\draw[white,fill=white] (.8,4.4) rectangle (1.47,4.9);
\draw[ densely dashed] (.8,4.33)--(.8,5);
\draw (1.7,4.33)--(1.9,4.33);
\draw (1.7,5)--(1.9,5);
\draw (1.8,4.33)--(1.8,5);
\draw (2.2,4.67) node {$\ell_1$};
\draw[white,fill=white] (.8,2.4) rectangle (1.47,3.5);
\draw[ densely dashed] (.8,2.33)--(.8,3.67);
\draw (1.7,2.33)--(1.9,2.33);
\draw (1.7,3.67)--(1.9,3.67);
\draw (1.8,2.33)--(1.8,3.67);
\draw (2.2,3.0) node {$\ell_2$};
\draw (1.8,1.75) node {$\vdots$};
\fill[color=brown,opacity=.3](0,0)--(1.5,0)--(1.5,5)--(0,5);
\end{tikzpicture}} 
\qquad\qquad
\subfloat[][]{\begin{tikzpicture}[thick]
\draw (0,0)--(1.5,0)--(1.5,5)--(0,5);
\foreach \h in {1,...,9}
{
\draw (.5,5-\h/3)--(1.5,5-\h/3);
};
\foreach \h in {1,...,4}
{
\draw (.5,\h/3)--(1.5,\h/3);
};
\draw (1,1.75) node {$\vdots$};
\draw[white,fill=white] (.8,.4) rectangle (1.47,.9);
\draw(.8,.33)--(.8,1);
\draw[white,fill=white] (.8,4.4) rectangle (1.47,4.9);
\draw (.8,4.33)--(.8,5);
\draw[white,fill=white] (.8,2.4) rectangle (1.47,3.5);
\draw (.8,2.33)--(.8,3.67);
\draw[ densely dashed] (.8,4.67)--(1.5,4.67);
\draw[ densely dashed] (.8,2.67)--(1.5,2.67);
\fill[color=brown,opacity=.3](0,0)--(1.5,0)--(1.5,5)--(0,5);
\end{tikzpicture}} 
\qquad\qquad
\subfloat[][]{\begin{tikzpicture}[thick]
\draw (0,0)--(1.5,0)--(1.5,5)--(0,5);
\foreach \h in {1,...,9}
{
\draw (.5,5-\h/3)--(1.5,5-\h/3);
};
\foreach \h in {1,...,4}
{
\draw (.5,\h/3)--(1.5,\h/3);
};
\draw (1,1.75) node {$\vdots$};
\draw[white,fill=white] (.8,.4) rectangle (1.47,.9);
\draw(.8,.33)--(.8,1);
\draw[white,fill=white] (.8,4.4) rectangle (1.47,4.9);
\draw (.8,4.33)--(.8,5);
\draw[white,fill=white] (.8,2.4) rectangle (1.47,3.5);
\draw (.8,2.33)--(.8,3.67);
\draw (.8,4.67)--(1.5,4.67);
\draw (.8,2.67)--(1.5,2.67);
\draw[ densely dashed] (.8,4.77)--(1.5,4.77);
\draw[ densely dashed] (.8,4.87)--(1.5,4.87);
\draw[ densely dashed] (.8,2.5)--(1.5,2.5);
\draw[ densely dashed] (.8,2.8)--(1.5,2.8);
\draw[ densely dashed] (.8,3.1)--(1.5,3.1);
\draw[ densely dashed] (.8,3.2)--(1.5,3.2);
\draw[ densely dashed] (.8,3.27)--(1.5,3.27);
\draw[ densely dashed] (.8,.5)--(1.5,.5);
\fill[color=brown,opacity=.3](0,0)--(1.5,0)--(1.5,5)--(0,5);
\end{tikzpicture}} 
\caption{(A): The right side of the square with $\bar r$ edges (and therefore $\bar r+1$ tiles) hitting it. (B): Pushing in $c$ vertical edges, of total length $\ell_1+\cdots+\ell_c=\ell$. (C): Adding $s-\bar s$ horizontal line segments to create $s-\bar s$ new  singular vertices. (D): Adding edges to the $\ell$ available bins to bring the number of tiles to $m$.}%
\label{fig:cont}%
\end{figure}

\begin{lemma}\label{lem1}
The number of $c$-component subsets of $\{1,\ldots,\bar r+1\}$ of total size $\ell$ is given by the formula
$$\binom{\ell-1}{c-1}\binom{\bar r+2-\ell}{c}.$$
\end{lemma}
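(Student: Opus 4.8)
The plan is to realize both binomial factors as independent counts, matching the product structure of the formula. A $c$-component subset $X\subseteq\{1,\dots,\bar r+1\}$ of total size $\ell$ is completely determined by two independent pieces of data: the list of sizes of its $c$ components, read from left to right, and the pattern of empty ``gaps'' that separate and surround those components. Since specifying a composition of $\ell$ together with any legal gap pattern reconstructs a unique subset (and conversely), this sets up a bijection between $c$-component subsets of size $\ell$ and pairs (composition, gap pattern), so the total count is the product of the two separate counts.

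First I would count the component sizes. Listing the components from left to right, their sizes $\ell_1,\dots,\ell_c$ are positive integers with $\ell_1+\cdots+\ell_c=\ell$, i.e.\ a composition of $\ell$ into $c$ positive parts. The number of such compositions is $\binom{\ell-1}{c-1}$, by the standard stars-and-bars count of placing $c-1$ dividers in the $\ell-1$ interior gaps between consecutive units. This produces the first factor.

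Next I would count the placements. Once the $c$ components occupy $\ell$ of the $\bar r+1$ positions, the remaining $\bar r+1-\ell$ positions are empty and split into $c+1$ runs: one before the first component, one after the last, and $c-1$ strictly between consecutive components. The $c-1$ internal runs must each contain at least one empty position (otherwise adjacent components would merge into one), whereas the two boundary runs may be empty. Writing $g_0,g_1,\dots,g_c$ for the sizes of these runs, I need the number of integer solutions of $g_0+\cdots+g_c=\bar r+1-\ell$ with $g_0,g_c\ge 0$ and $g_1,\dots,g_{c-1}\ge 1$. Substituting $g_i\mapsto g_i-1$ on the $c-1$ internal runs reduces this to a standard non-negative stars-and-bars count, giving $\binom{(\bar r+1-\ell-(c-1))+c}{c}=\binom{\bar r+2-\ell}{c}$ and hence the second factor. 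Multiplying the two independent counts yields the claimed formula, and the result is correctly $0$ when there is insufficient room, namely when $\bar r+2-\ell<c$.

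The only delicate point, and the step where I expect care to be needed, is the gap bookkeeping in the second factor: correctly distinguishing the $c-1$ forced-nonempty internal gaps from the two possibly-empty boundary gaps, and tracking the resulting index shift so that the binomial comes out as $\binom{\bar r+2-\ell}{c}$ rather than a neighboring value. Everything else is a routine application of stars and bars together with the independence of the two choices.
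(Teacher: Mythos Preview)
Your argument is correct and is essentially the same as the paper's: both factor the count into a choice of component sizes (a composition of $\ell$ into $c$ positive parts, giving $\binom{\ell-1}{c-1}$) times a choice of placement among the white positions, with the paper phrasing the second choice as selecting $c$ of the $\bar r+2-\ell$ interstices of the $\bar r+1-\ell$ white billiard balls rather than as your gap-count with stars and bars. Your version is slightly more explicit about the internal-versus-boundary gap bookkeeping, but the underlying bijection is identical.
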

\begin{proof}
We think of a subset $X\subset\{1,\ldots,\bar r+1\}$ as a line of $\bar r+1$ billiard balls, some of which are black and indicate membership in $X$, and some of which are white, indicating non-membership in $X$. By hypothesis we are counting the number of such configurations where there are $c$ black components. There are $\binom{\ell-1}{c-1}$ ways to break $\ell$ into $c$ pieces.
The number of ways of distributing those $c$ pieces into the $\bar r+2-\ell$ interstices of the $\bar r +1 -\ell$ white billiard balls is  $\binom{\bar r+2-\ell}{c}$.
\end{proof}
The following lemma is well-known and can be found, for example, in \cite{comb}.
\begin{lemma}\label{lem2}
The number of nonnegative integer solutions $(x_1,\ldots,x_n)$ to the equation $x_1+\cdots+x_n=m$ is given by the formula $\binom{m+n-1}{n-1}$.
\end{lemma}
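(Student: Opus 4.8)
The plan is to prove this by the standard ``stars and bars'' bijection, exhibiting an explicit one-to-one correspondence between the solution set and a set of two-symbol strings of fixed length and fixed weight, whose cardinality is manifestly a binomial coefficient.

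First I would encode each solution as a string. Given a nonnegative integer solution $(x_1,\ldots,x_n)$ with $x_1+\cdots+x_n=m$, form the string consisting of $x_1$ stars, then a bar, then $x_2$ stars, then a bar, and so on, ending with $x_n$ stars after the final bar. Since there are $m$ stars in total and exactly $n-1$ bars, this is a string of length $m+n-1$ over the alphabet $\{\star,\mid\}$ containing precisely $n-1$ bars.

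Next I would verify that this encoding is a bijection. The inverse map reads a string of $m$ stars and $n-1$ bars from left to right and sets $x_i$ equal to the number of stars strictly between the $(i-1)$-st and the $i$-th bar, with the obvious conventions at the two ends. The point requiring care is that runs of length zero are permitted, so that solutions with some $x_i=0$ (consecutive bars, or a bar at either end) are handled correctly; once this is checked the two maps are mutually inverse. Hence the number of solutions equals the number of length-$(m+n-1)$ strings with exactly $n-1$ bars, and choosing the positions of the bars among the $m+n-1$ available slots gives $\binom{m+n-1}{n-1}$.

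There is essentially no obstacle here beyond the bookkeeping of empty runs just described. As a consistency check one could instead argue by induction on $n$: conditioning on the value $x_n=j$ for $0\le j\le m$ leaves $n-1$ variables summing to $m-j$, which yields the recurrence $\sum_{j=0}^{m}\binom{m-j+n-2}{n-2}=\binom{m+n-1}{n-1}$. This is precisely the hockey-stick identity and reproduces the claimed formula, so either route completes the proof.
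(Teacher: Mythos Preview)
Your proof is correct; the stars-and-bars bijection you describe is the standard argument and is fully valid, and your inductive cross-check via the hockey-stick identity is fine as well. The paper itself does not supply a proof of this lemma: it simply declares the result well-known and cites a combinatorics textbook, so there is no approach in the paper to compare against.
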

The recursive strategy of the proof of Theorem~\ref{thm:recursion} is illustrated in Figure~\ref{squarediag}. We start with the tiling with one rectangle, push an edge in from the right in all ways, and then again push an edge in from the right to the resulting tilings, restricting at each step to rectangulations with $\leq 4$ tiles. The new pushed-in edges that are created at each step are dashed. Similarly one can construct new tilings starting with \begin{minipage}{.45in}
    \begin{tikzpicture} [thick]
    \pgfsetfillopacity{.3};
\draw[fill=brown] (0,0) -- (1,0)--(1,1)--(0,1)--(0,0);
\draw (0,.5)--(1,.5);
\end{tikzpicture}
\end{minipage}, as in in Figure~\ref{squarediag2}.

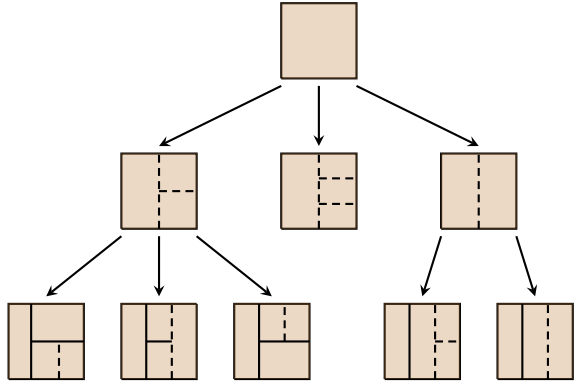
\begin{figure}
\begin{center}
    \begin{tikzpicture} [thick]
\draw(0,0) -- (1,0)--(1,1)--(0,1)--(0,0);
\fill[brown,opacity=.3](0,0) -- (1,0)--(1,1)--(0,1)--(0,0);
\draw(.3,0)--(.3,1);
\draw(.3,.5)--(1,.5);
\draw[densely dashed](.67,0)--(.67,.5);
\draw(1.5,0)-|(2.5,1.0);
\draw(1.5,0)|-(2.5,1.0);
\fill[brown,opacity=.3](1.5,0) -- (2.5,0)--(2.5,1)--(1.5,1)--(1.5,0);
\draw(1.83,0)--(1.83,1);
\draw[densely dashed](2.17,0)--(2.17,1);
\draw(1.83,.5)--(2.17,.5);
\draw(3,0)-|(4,1)-|(3,0);
\fill[brown,opacity=.3](3,0) -- (4,0)--(4,1)--(3,1)--(3,0);
\draw(3.33,0)--(3.33,1);
\draw[densely dashed](3.67,.5)--(3.67,1);
\draw(3.33,.5)--(4,.5);
\draw(1.5,2)-|(2.5,3)-|(1.5,2);
\fill[brown,opacity=.3](1.5,2) -- (1.5,3)--(2.5,3)--(2.5,2)--(1.5,2);
\draw(5,0)-|(6,1)-|(5,0);
\fill[brown,opacity=.3](5,0) -- (6,0)--(6,1)--(5,1)--(5,0);
\draw(6.5,0)-|(7.5,1)-|(6.5,0);
\fill[brown,opacity=.3](6.5,0) -- (7.5,0)--(7.5,1)--(6.5,1)--(6.5,0);
\draw(5.75,2)-|(6.75,3)-|(5.75,2);
\fill[brown,opacity=.3](5.75,2) -- (6.75,2)--(6.75,3)--(5.75,3)--(5.75,2);
\draw(3.625,2)-|(4.625,3)-|(3.625,2);
\fill[brown,opacity=.3](3.625,2) -- (4.625,2)--(4.625,3)--(3.625,3)--(3.625,2);
\draw(3.625,4)-|(4.625,5)-|(3.625,4);
\fill[brown,opacity=.3](3.625,4) -- (4.625,4)--(4.625,5)--(3.625,5)--(3.625,4);
\draw(5.33,0)--(5.33,1);
\draw[densely dashed](5.67,0)--(5.67,1);
\draw[densely dashed](5.67,.5)--(6,.5);
\draw(6.83,0)--(6.83,1);
\draw[densely dashed](7.17,0)--(7.17,1);
\draw[densely dashed](2,2)--(2,3);
\draw[densely dashed](2,2.5)--(2.5,2.5);
\draw[densely dashed](4.125,2)--(4.125,3);
\draw[densely dashed](4.125,2.33)--(4.625,2.33);
\draw[densely dashed](4.125,2.67)--(4.625,2.67);
\draw[densely dashed](6.25,2)--(6.25,3);
\draw[-stealth](3.625,3.9)--(2,3.1);
\draw[-stealth](4.125,3.9)--(4.125,3.1);
\draw[-stealth](4.625,3.9)--(6.25,3.1);
\draw[-stealth](1.5,1.9)--(.5,1.1);
\draw[-stealth](2,1.9)--(2,1.1);
\draw[-stealth](2.5,1.9)--(3.5,1.1);
\draw[-stealth](5.75,1.9)--(5.5,1.1);
\draw[-stealth](6.75,1.9)--(7,1.1);
\end{tikzpicture}
\end{center}
\caption{Illustrating the recursive strategy of the proof of Theorem~\ref{thm:recursion}.}\label{squarediag}
\end{figure}

\begin{figure}
\begin{center}
    \begin{tikzpicture} [thick]
\draw(0,0) -- (1,0)--(1,1)--(0,1)--(0,0);
\fill[color=brown,opacity=.3](0,0) -- (1,0)--(1,1)--(0,1)--(0,0);
\draw(1.5,0)-|(2.5,1.0);
\draw(1.5,0)|-(2.5,1.0);
\fill[color=brown,opacity=.3](1.5,0) -- (2.5,0)--(2.5,1)--(1.5,1)--(1.5,0);
\draw(-1.5,0)-|(-.5,1)-|(-1.5,0);
\fill[color=brown,opacity=.3](-1.5,0) -- (-.5,0)--(-.5,1)--(-1.5,1)--(-1.5,0);
\draw(1.5,2)-|(2.5,3)-|(1.5,2);
\fill[color=brown,opacity=.3](1.5,2) -| (2.5,3)-|(1.5,2);
\draw(6,0)-|(7,1)-|(6,0);
\fill[color=brown,opacity=.3](6,0)-|(7,1)-|(6,0);
\draw(7.5,0)-|(8.5,1)-|(7.5,0);
\draw(9,0)-|(10,1)-|(9,0);
\draw(5.75,2)-|(6.75,3)-|(5.75,2);
\draw(3.625,2)-|(4.625,3)-|(3.625,2);
\draw(3.625,4)-|(4.625,5)-|(3.625,4);
\draw(3.625,0)-|(4.625,1)-|(3.625,0);
\draw(-.5,2)-|(.5,3)-|(-.5,2);
\draw(-2,2)-|(-1,3)-|(-2,2);
\draw(-3.5,2)-|(-2.5,3)-|(-3.5,2);
\draw(7.5,2)-|(8.5,3)-|(7.5,2);
\draw(9,2)-|(10,3)-|(9,2);
\fill[color=brown,opacity=.3](7.5,0)-|(8.5,1)-|(7.5,0);
\fill[color=brown,opacity=.3](9,0)-|(10,1)-|(9,0);
\fill[color=brown,opacity=.3](5.75,2)-|(6.75,3)-|(5.75,2);
\fill[color=brown,opacity=.3](3.625,2)-|(4.625,3)-|(3.625,2);
\fill[color=brown,opacity=.3](3.625,4)-|(4.625,5)-|(3.625,4);
\fill[color=brown,opacity=.3](3.625,0)-|(4.625,1)-|(3.625,0);
\fill[color=brown,opacity=.3](-.5,2)-|(.5,3)-|(-.5,2);
\fill[color=brown,opacity=.3](-2,2)-|(-1,3)-|(-2,2);
\fill[color=brown,opacity=.3](-3.5,2)-|(-2.5,3)-|(-3.5,2);
\fill[color=brown,opacity=.3](7.5,2)-|(8.5,3)-|(7.5,2);
\fill[color=brown,opacity=.3](9,2)-|(10,3)-|(9,2);
\draw[-stealth](3.625,3.9)--(2,3.1);
\draw[-stealth](4.125,3.9)--(4.125,3.1);
\draw[-stealth](4.625,3.9)--(6.25,3.1);
\draw[-stealth](4.125,1.9)--(4.125,1.1);
\draw[-stealth](1.5,1.9)--(-1,1.1);
\draw[-stealth](2,1.9)--(.5,1.1);
\draw[-stealth](2.5,1.9)--(2,1.1);
\draw[-stealth](5.75,1.9)--(6.5,1.1);
\draw[-stealth](6.25,1.9)--(8,1.1);
\draw[-stealth](6.75,1.9)--(9.5,1.1);
\draw[-stealth](3.525,4)--(0,3.1);
\draw[-stealth](3.525,4.1)--(-1.5,3.1);
\draw[-stealth](3.525,4.2)--(-3,3.1);
\draw[-stealth](4.725,4)--(8,3.1);
\draw[-stealth](4.725,4.1)--(9.5,3.1);
\draw(3.625,4.5)--(4.625,4.5);
\draw(-3.5,2.33)--(-3,2.33);
\draw[densely dashed](-3,2)--(-3,3);
\draw[densely dashed](-3,2.67)--(-2.5,2.67);
\draw(-2,2.5)--(-1.5,2.5);
\draw[densely dashed](-1.5,2)--(-1.5,3);
\draw[densely dashed](-1.5,2.5)--(-1,2.5);
\draw(-.5,2.67)--(0,2.67);
\draw[densely dashed](0,2)--(0,3);
\draw[densely dashed](0,2.33)--(.5,2.33);
\draw(1.5,2.5)--(2.5,2.5);
\draw[densely dashed](2.0,2.5)--(2.0,3);
\draw(7.5,2.33)--(8.5,2.33);
\draw[densely dashed](8,2.33)--(8,3);
\draw[densely dashed](8,2.67)--(8.5,2.67);
\draw(9,2.67)--(10,2.67);
\draw[densely dashed](9.5,2)--(9.5,2.67);
\draw[densely dashed](9.5,2.33)--(10,2.33);
\draw(3.625,2.5)--(4.125,2.5);
\draw[densely dashed](4.125,2)--(4.125,3);
\draw(5.75,2.5)--(6.75,2.5);
\draw[densely dashed](6.25,2)--(6.25,2.5);
\draw(-1.5,.5)--(-.5,.5);
\draw(-1.5+.33,.5)--(-1.5+.33,1);
\draw[densely dashed](-1.5+.67,.5)--(-1.5+.67,1);
\draw(0,.5)--(.67,.5);
\draw(.33,.5)--(.33,1);
\draw[densely dashed](.67,0)--(.67,1);
\draw(1.5,.5)--(2.5,.5);
\draw(1.5+.33,.5)--(1.5+.33,1);
\draw[densely dashed](1.5+.67,0)--(1.5+.67,.5);
\draw(3.625,.5)--(3.625+.33,.5);
\draw(3.625+.33,0)--(3.625+.33,1);
\draw[densely dashed] (3.625+.67,0)--(3.625+.67,1);
\draw(6,.5)--(7,.5);
\draw(6.33,0)--(6.33,.5);
\draw[densely dashed](6.67,.5)--(6.67,1);
\draw(7.5,.5)--(8.5,.5);
\draw(7.83,0)--(7.83,.5);
\draw[densely dashed](8.17,0)--(8.17,.5);
\draw(9,.5)--(9.67,.5);
\draw(9.33,0)--(9.33,.5);
\draw[densely dashed](9.67,0)--(9.67,1);
\end{tikzpicture}
\end{center}
\caption{Starting with a different tiling.}\label{squarediag2}
\end{figure}
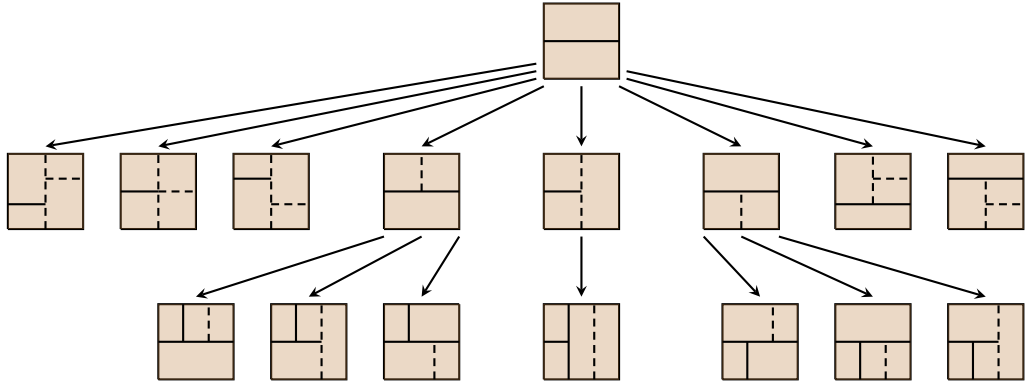

\section{Topological remarks}
 Given a rectangulation $R$ with $\leq n$ tiles, let $e^R\subset\mathbb T_n$ be the set of points in $\mathbb T_n$ which correspond to tilings in the equivalence class $R$. 
As we argue in the next proposition, if $R$ has $m$ tiles and $s$ singular vertices, then $e^R$ is an open cell of dimension $m-s-1$, and the set of all such cells turns
 the space $\mathbb T_n$ into a cell complex.

\begin{proposition}\label{prop:cell}
Let $R$ be a rectangulation with $m$ tiles and $s$ singular vertices. Then $e^R$ is an open cell of dimension $m-s-1$, and the partition of $\mathbb T_n$ into these cells makes it into a cell complex.
\end{proposition}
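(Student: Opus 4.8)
The plan is to establish three things in sequence: that each $e^R$ is a cell of the claimed dimension, that the cells partition $\TT{n}$, and that the closure/attaching data satisfies the axioms of a CW (or regular cell) complex. The core geometric idea is to parametrize a tiling within a fixed combinatorial type $R$ by the positions of its interior maximal line segments (the ``walls''). First I would set up coordinates: fix the combinatorial type $R$ with $m$ tiles, and describe a representative tiling by recording, for each maximal horizontal and vertical interior segment, its perpendicular coordinate (the height of a horizontal wall, the horizontal position of a vertical wall). These coordinates are constrained by inequalities (walls must stay strictly between the features that bound them, so that the combinatorial type is preserved) and by equalities at singular vertices, where a horizontal and a vertical wall are forced to cross at a common point.

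The dimension count is the heart of the matter. I would argue that a rectangulation with $m$ tiles, viewed as a planar subdivision, has a number of independent free coordinates equal to $m-1$ in the absence of singular vertices, and that each singular vertex imposes exactly one independent linear constraint, reducing the count to $m-s-1$. The cleanest route is via a Euler-characteristic / edge-counting argument on the planar graph of the tiling: relate the number of maximal segments (which carry the free coordinates) to the number of tiles using $V-E+F=2$, being careful to treat the four boundary edges of the square as fixed. One then checks that away from singular vertices all the wall coordinates vary independently within an open box of inequalities, so $e^R$ is homeomorphic to an open ball $(0,1)^{m-s-1}$ via the map sending a tiling to its vector of wall positions; continuity of the inverse uses that the Hausdorff metric on the $1$-skeleton controls these coordinates. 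The main obstacle I expect is precisely the bookkeeping that a singular vertex drops the dimension by exactly one and not more—i.e.\ that the crossing constraints are linearly independent and that a configuration of several singular vertices removes exactly one degree of freedom each; I would handle this by induction on $s$, or by exhibiting the constraint matrix and showing it has full rank.

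For the partition statement, I would observe that every tiling by $\leq n$ rectangles lies in exactly one combinatorial equivalence class, so the sets $e^R$ are disjoint and cover $\TT{n}$; this is essentially the definition of rectangulation. The remaining work is to verify the cell-complex axioms: that $\TT{n}$ carries the weak topology with respect to the $\overline{e^R}$, and that the boundary of each closed cell is contained in a union of cells of strictly lower dimension. I would describe the attaching map on $\overline{e^R}\cong [0,1]^{m-s-1}$ by sending boundary points to the limiting tilings obtained as wall coordinates degenerate: when a free coordinate hits an endpoint of its allowed interval, either a segment collapses (merging two tiles, lowering $m$) or two previously distinct vertices coincide to create a new singular vertex (lowering $m-s$), and in either case the limit is a tiling of a coarser combinatorial type with strictly smaller dimension. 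Finiteness of $\TT{n}$ (there are finitely many rectangulations with $\leq n$ tiles, as the recursion of Theorem~\ref{thm:recursion} confirms) makes the weak-topology condition automatic, so the principal content is checking that these degeneration limits always land in genuinely lower-dimensional cells, which follows from the dimension formula already established.
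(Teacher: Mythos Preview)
Your strategy—parametrize $e^R$ by the perpendicular coordinates of its interior maximal segments (``walls'') and identify it with the interior of a convex polytope—is essentially the paper's approach (the paper uses vertex coordinates, which it then observes reduce to wall coordinates). Two steps, however, are genuinely wrong or missing. First, your account of singular vertices is mistaken: you say a singular vertex forces ``a horizontal and a vertical wall to cross,'' imposing an equality among wall coordinates, but a horizontal wall carries only a $y$-coordinate and a vertical wall only an $x$-coordinate, so their crossing relates nothing. What actually happens is that a singular vertex reduces the \emph{number} of walls: at a cross, two collinear segments that would be distinct in any generic resolution merge into a single maximal segment. The correct statement is that a rectangulation with $m$ tiles and $s$ singular vertices has exactly $m-s-1$ interior walls, each contributing one free coordinate subject only to open inequalities—there are no equalities at all. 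The paper obtains this count from the vertex identity $k=2m+2-s$ together with the Euler relation $k-e+m=1$; your proposed ``constraint matrix of full rank'' is aimed at constraints that do not exist.

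Second, you skip the surjectivity of the parametrization. Given wall coordinates satisfying the local order inequalities, it is not automatic that the reconstructed figure is an embedded tiling of combinatorial type $R$: two segments that do not meet in $R$ might a priori acquire an unintended intersection. The paper devotes a substantial ``sawtooth'' argument to this point, building monotone staircase paths that certify, from the defining inequalities alone, a definite left/right or above/below relation between every pair of vertices, and thereby ruling out spurious crossings. Without this step you have only an injection of $e^R$ into an open polytope, not a homeomorphism onto one.
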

\begin{proof}
First we define a cell structure on $\mathbb T_n$ as a set, after which we show that the topology induced by this cell structure is equivalent to the one given by the Hausdorff metric.

Given a rectangulation $R$ with $k$ vertices, including vertices on the boundary square and the four corners of the square, fix an ordering of the vertices $v_1,\ldots,v_k$. Let $J$ be a tiling in the equivalence class of $R$. We define $\Phi(J)\in (\mathbb R^2)^k\cong \mathbb R^{2k}$ by the formula $(v_1(J),\cdots, v_k(J))$, where $v_i(J)\in \mathbb R^2$ is the position of the $i$th vertex of $R$ in the particular realization $J$. Then each edge of $J$ imposes a condition of the form 
\begin{equation*}\tag{$\dagger$}
y_i=y_j, \,\,x_i<x_j\,\,\text{ or }\,\, x_i=x_j,\,\, y_i<y_j. \label{eqn}
\end{equation*}
In addition, the four vertices of the ambient square are in fixed positions. The subset $P^R\subset\mathbb R^{2n}$ described by these equalties and inequalties is an intersection of the subspace, $S$, defined by equating some coordinates, with coordinate half-spaces. The closure is a convex subset of $S$ with nonempty interior, and so is a cell of the same dimension as $S$. This dimension is $2k-e-4$, where $e$ is the number of edges. This is because each edge imposes a constraint, and insisting the four corners are in fixed positions reduces the dimension by $4$. We claim that $k=2m+2-s$. To see this, for each rectangle in $R$ draw a diagonal from upper right to lower left. Each $3$-valent vertex is hit by exactly one such diagonal. Two of the four corners of the ambient square are hit by diagonals. Finally every singular vertex is hit twice. Thus counting up the $2m$ diagonal endpoints, we omit two outer vertices and double count each singular vertex, establishing the formula. Since $k-e+m=1$ is the Euler characteristic, we see that 
$$2k-e-4=(2m+2-s)+(k-e)-4=2m+2-s+(1-m)-4=m-s-1.$$
Thus $S$, and therefore $P^R$, has dimension $m-s-1$.

 Now we argue that $\im(\Phi)\subset P^R$. So suppose $J'$ is another tiling in the equivalence class of $R$. Then the homeomorphism sending $J$ to $J'$ preserves horizontal and vertical edges, 
 and we claim it also preserves their directions. That is, the homeomorphism preserves the left-to-right or top-to-bottom ordering of the endpoints of each edge. Because the homeomorphism fixes the corners, it must be orientation-preserving. This  implies that if it fixes the direction of an edge, it will fix the direction of all edges that intersect it. But we know that it preserves the directions of the boundary edges,
 so it fixes the directions of all edges. Thus the vertices and edges of $J'$ lead to the same set of algebraic conditions in (\ref{eqn}), and $\Phi(J')\in P^R$. 
 
Next we show that every $\zeta\in P^R$ corresponds to a tiling of type $R$. Put vertices down in the square at the appropriate coordinates drawn pairwise from $\zeta$, and draw an edge for every one of the pairs of conditions defining the polyhedron. We must show that the resulting object, $J_\zeta$, is a rectangular tiling, in particular, that it is embedded.
 We claim that for every pair of vertices $p,q$, the polyhedral (in)equalities (\ref{eqn}) imply either that one always lies below the other or that one always lies to the left of the other in  $J_\zeta$ for all $\zeta$. To see this, fix a tiling, $J$, that realizes $R$, and
  assume that $p$ lies to the left of $q$ in $J$. 
Draw a NW to SE sawtooth configuration through $p$ and a SW to NE sawtooth through $q$ (Figure ~\ref{sawteeth}). Depending on how these paths intersect or hit the boundary, we get either a monotone left to right path or a monotone down-up path connecting $p$ with $q$, a couple cases of which are illustrated in Figure~\ref{sawteeth}. The inequalities that are associated with the edges of these paths then combine to finish proving the claim. As an immediate result, no two vertices coincide in $J_\zeta$. Suppose now that $J_\zeta$ is not embedded because  a vertex lies in the interior of an edge. Without loss of generality, say the edge is horizontal. Then since the vertex lies at the same height as the endpoints of the edge, the previous analysis implies that its $x$-coordinate always lies between the two edge endpoints. Suppose that in $J$ the vertex lies below the edge. Draw a NW sawtooth from the point, and a SW sawtooth
from the left endpoint of the edge. Because the vertex lies below the edge, these sawtooth paths will combine to produce a monotone increasing path from the vertex to the left end of the edge. Hence the vertex must lie below the edge in $J_\zeta$ based on the resulting string of inequalities. This is a contradiction. A similar argument takes care of the case when two edges intersect orthogonally. Any other type of edge-edge intersection will involve a vertex being contained in an edge's interior. Thus $J_\zeta$, is embedded. By construction, the $1$-skeleton of $J$ is homeomorphic to the $1$-skeleton of $J_\zeta$ by a homeomorphism that preserves directed horizontality and verticality, and we claim this homeomorphism extends to the entire square. Consider a rectangular tile in $J$. Then the boundary is a $1$-dimensional rectangle and maps under the homeomorphism to a $1$-dimensional rectangle in $J_\zeta$. Essentially by the Jordan Curve Theorem, this bounds a disk which only intersects the $1$-skeleton of $J_\zeta$ along its boundary. Thus we can extend the homeomorphism to the interiors of these rectangles. Continue inductively for all rectangles.

So we have a bijection  $\Phi\colon e^R\to P^R$.

In order to get a cell structure, we define a map $\Psi_R\colon\overline{P^R}\to \mathbb T_n$ which is an extension of $\Phi^{-1}$, showing that the image of the boundary $\Psi_R(\partial\overline{P^R})$ lies in the union of lower-dimensional cells. Let $\zeta\in \partial\overline{P^R}$. Then $\zeta$ satisfies the same conditions (\ref{eqn}) as a point in the interior, except that some of the inequalities have become equalities. Geometrically, this corresponds to at least one edge contracting to a point. So $\zeta$ represents a rectangular tiling $J_\zeta$ either with fewer rectangles or more singular points. We define $\Psi_R(\zeta)=J_\zeta$.  Then, $\Psi_R(\partial \overline{P^R})$ is contained in a union of cells of smaller dimension. So the maps $\Psi_R$ induce a cell structure.

Next we argue that the topology induced by this cell structure is equal to the one induced by the Hausdorff metric. We claim that the maps $\Psi_R\colon \overline{P^R}\to \overline{e^R}$ are continuous. The coordinates on $\mathbb R^{2n}$ induce coordinates on $S$ by assembling each set of equated coordinates into a single coordinate. Each coordinate in $S$ thus corresponds to a set of vertices in $e^R$ mutually connected by either vertical or horizontal edges. That is the coordinates in $S$ are in 1-1 correspondence with \emph{walls}. So it suffices to observe that moving a wall (including the case of coalescing two walls or pushing a wall into the boundary square) is continuous in the Hausdorff metric. The continuity of the maps $\Psi_R$ establishes that the cell topology is finer than the metric topology. On the other hand, with respect to the cell complex topology, $\mathbb T_n$ is compact (since it is a union of finitely many cells) and with respect to the metric topology, it is Hausdorff. Any continuous bijection from a compact to a Hausdorff space is a homeomorphism, so this implies that the two topologies indeed coincide.
\end{proof}

\begin{figure}
\begin{center}
\begin{tikzpicture}
\draw[thick] (0,0)-|(3,3)-|(0,0);
\fill[brown,opacity=.3] (0,0)-|(3,3)-|(0,0);
\fill[black] (1,2) circle (.05);
\fill[black] (2.5,1.5) circle (.05);
\draw (.5,0)|-(2,1.2)|-(2.5,1.5)|-(3,1.7);
\draw (0,2.5)-|(1,2)-|(1.4,1)-|(2.2,0);
\draw (.8,2) node {$p$};
\draw (2.7,1.4) node {$q$};
\end{tikzpicture}
\hspace{3em}
\begin{tikzpicture}
\draw[thick] (0,0)-|(3,3)-|(0,0);
\fill[brown,opacity=.3] (0,0)-|(3,3)-|(0,0);
\fill[black] (1,2) circle (.05);
\fill[black] (2.5,1.5) circle (.05);
\draw (.5,0)|-(2,1.2)|-(2.5,1.5)|-(3,1.7);
\draw (0,2.5)-|(1,2)-|(1.5,1.9)--(3,1.9);
\draw (.8,2) node {$p$};
\draw (2.7,1.4) node {$q$};
\end{tikzpicture}
\end{center}
\caption{A case where $p$ is forced to be to the left of $q$ and a case where $p$ is forced to be above $q$ for every $J_\zeta$.}\label{sawteeth}
\end{figure}
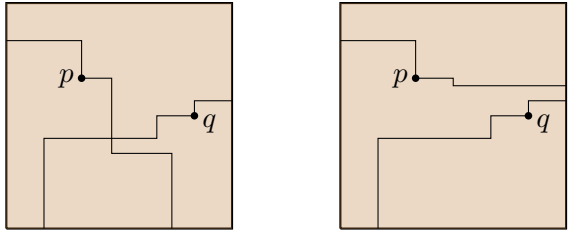

\subsection{Orientation and the cellular boundary operator}
In the proof of Theorem~\ref{thm:wedge} it will be necessary to understand the boundary operator for the cellular chain complex, which we analyze in this subsection. From the proof of the previous proposition, one sees that if $e^{R'}$ appears in the boundary of $e^R$, then it lifts by $\Psi_R$ to possibly several copies of $e^{R'}$ sitting on the boundary of $P^R$. Each of these copies is a face of the polytope $P^R$. Hence in cellular homology, the boundary operator will be a sum over the codimension $1$ faces of $P^R$, with coefficients $\pm 1$ controlled by the orientation. As was mentioned in the proof of Proposition~\ref{prop:cell}, $e^R$ can be parameterized by the positions of its (interior) vertical and horizontal walls. Thus an orientation of $e^R$ regarded as an element of the top exterior power of  the tangent space $\bigwedge^{\dim e^R} T_pP^R$ can be thought of as an ordering of the walls of $R$ up to even permutation. Suppose $\alpha$ is an orientation of $P^R$. We adopt the convention that the induced orientation as you move toward the boundary is given by $\beta$, where $\nu\wedge\beta=\alpha$ and $\nu$ is an outward-pointing normal. 
For example, we consider the rectangulation $R$ given by
$ \begin{minipage}{1.1cm}
    \begin{tikzpicture} [thick,scale=2]
\draw (0,0) -- (.5,0)--(.5,.5)--(0,.5)--(0,0);
\fill[brown,opacity=.3] (0,0)-|(.5,.5)-|(0,0);
\draw (.33,.25)--(.5,.25);
\draw(.33,0)--(.33,.5);
\draw[densely dashed](.17,0)--(.17,.5);
\end{tikzpicture}
\end{minipage}$, and analyze the boundary map when we degenerate the dashed edge by pushing it into the left boundary or into the neighboring right black edge. In both situations, the result is
$R'= \begin{minipage}{1.1cm}
    \begin{tikzpicture} [thick,scale=2]
    \fill[brown,opacity=.3] (0,0)-|(.5,.5)-|(0,0);
\draw (0,0) -- (.5,0)--(.5,.5)--(0,.5)--(0,0);
\draw (.33,.25)--(.5,.25);
\draw(.33,0)--(.33,.5);
\end{tikzpicture}
\end{minipage}$. These are the only two ways in which a copy of $e^{R'}$ appears on the boundary of $P^R$. The cell $e^R$ is parameterized by $x,x',y$ where $x$ and $x'$ are the coordinates of the two vertical walls and $y$ is the coordinate of the horizontal wall.
 Assume that $P^R$ has orientation $ dx \wedge dx'\wedge dy$.  As $x$ moves to the right, the outward pointing normal is $dx$, whereas the outward pointing normal as the dashed line moves to the left is $-dx$. Thus the induced orientations are $ dx'\wedge dy$ and $ -dx'\wedge dy$ respectively. Thus $e^{R'}$ appears with coefficient $0$ in the cellular chain complex. The same considerations apply to the horizontal line, and since these are the only codimension $1$ degeneracies, we conclude that the cell $e^R$ actually represents a cycle in the cellular chain complex: 
$$\partial \,\begin{minipage}{1.1cm}
    \begin{tikzpicture} [thick,scale=2]
\draw (0,0) -- (.5,0)--(.5,.5)--(0,.5)--(0,0);
\fill[brown,opacity=.3] (0,0)-|(.5,.5)-|(0,0);
\draw (.33,.25)--(.5,.25);
\draw(.33,0)--(.33,.5);
\draw(.17,0)--(.17,.5);
\end{tikzpicture}
\end{minipage}=0.$$
(For simplicity we are writing $\partial R$ as opposed to $\partial e^R$.)

In fact, the walls in every rectangulation $R$ can be canonically ordered, giving rise to a canonical orientation of the cell $e^R$. We sketch the construction, leaving details to the reader, since it is not strictly necessary for the main argument. We define two relations $\prec_h$ and $\prec_v$ on the set of walls of $R$ as follows. We let $w\prec_v w'$ if the wall $w$ lies strictly below $w'$ in every tiling realizing $R$. Similarly $w\prec_h w'$ if $w$ lies strictly to the left of $w'$ in every tiling realizing $R$. Suppose that $w$ and $w'$ are distinct vertical walls that are not comparable by $\prec_h$. Then it is an exercise to show that they are comparable by $\prec_v$. So we define a relation $\prec$ on \emph{vertical} walls by
$$w\prec w' \text{ iff }\begin{cases}
w\prec_h w'\text { or } \\
w\not\prec_h w'\text{ and }w'\not\prec_h w\text{ and } w\prec_vw'.
\end{cases}$$
It can be shown that this defines a total order on the set of vertical walls. Define a similar relation on horizontal walls by interchanging $\prec_v$ and $\prec_h$ in the above definition. Finally we get an order on the set of all walls by putting the vertical ones before the horizontal ones.

So now consider \begin{minipage}{1.1cm}
\begin{tikzpicture}[thick,scale=1]
\draw(0,0)--(1,0)--(1,1)--(0,1)--(0,0);
\fill[brown,opacity=.3] (0,0) rectangle (1,1);
\draw(0,.5)--(0,.66);
\draw(.33,0)--(.33,1);
\draw(.66,0)--(.66,1);
\draw(.66,.66)--(1,.66);
\draw(0,.5)--(.66,.5);
\end{tikzpicture}
\end{minipage}.
The boundary cell corresponding to moving the left-hand vertical edge 
\begin{minipage}{1.1cm}
\begin{tikzpicture}[thick,scale=1]
\draw(0,0)--(1,0)--(1,1)--(0,1)--(0,0);
\fill[brown,opacity=.3] (0,0) rectangle (1,1);
\draw(0,.5)--(0,.66);
\draw[densely dashed](.33,0)--(.33,1);
\draw(.66,0)--(.66,1);
\draw(.66,.66)--(1,.66);
\draw(0,.5)--(.66,.5);
\end{tikzpicture}
\end{minipage}
has multiplicity zero in the boundary operator, as in the previous example. There are three other codimension $1$ cells that appear on the boundary, corresponding to moving the right-hand horizontal edge 
\begin{minipage}{1.1cm}
\begin{tikzpicture}[thick,scale=1]
\draw(0,0)--(1,0)--(1,1)--(0,1)--(0,0);
\fill[brown,opacity=.3] (0,0) rectangle (1,1);
\draw(0,.5)--(0,.66);
\draw(.33,0)--(.33,1);
\draw(.66,0)--(.66,1);
\draw[densely dotted](.66,.66)--(1,.66);
\draw(0,.5)--(.66,.5);
\end{tikzpicture}
\end{minipage}
up and down, and the left-hand horizontal edge 
\begin{minipage}{1.1cm}
\begin{tikzpicture}[thick,scale=1]
\draw(0,0)--(1,0)--(1,1)--(0,1)--(0,0);
\fill[brown,opacity=.3] (0,0) rectangle (1,1);
\draw[densely dashed](0,.5)--(.66,.5);
\draw(.33,0)--(.33,1);
\draw(.66,0)--(.66,1);
\draw(.66,.66)--(1,.66);
\end{tikzpicture}
\end{minipage}
down. These three faces have multiplicity $\pm 1$, depending on orientations.  Indeed
$$\partial \,\begin{minipage}{1.1cm}
\begin{tikzpicture}[thick,scale=1]
\draw(0,0)--(1,0)--(1,1)--(0,1)--(0,0);
\fill[brown,opacity=.3] (0,0) rectangle (1,1);
\draw(0,.5)--(0,.66);
\draw(.33,0)--(.33,1);
\draw(.66,0)--(.66,1);
\draw(.66,.66)--(1,.66);
\draw(0,.5)--(.66,.5);
\end{tikzpicture}
\end{minipage}
=
 \begin{minipage}{1.1cm}
\begin{tikzpicture}[thick,scale=1]
\draw(0,0)--(1,0)--(1,1)--(0,1)--(0,0);
\fill[brown,opacity=.3] (0,0) rectangle (1,1);
\draw(0,.5)--(0,.66);
\draw(.33,0)--(.33,1);
\draw(.66,0)--(.66,1);
\draw(.66,.5)--(1,.5);
\draw(0,.5)--(.66,.5);
\end{tikzpicture}
\end{minipage}
-
 \begin{minipage}{1.1cm}
\begin{tikzpicture}[thick,scale=1]
\draw(0,0)--(1,0)--(1,1)--(0,1)--(0,0);
\fill[brown,opacity=.3] (0,0) rectangle (1,1);
\draw(0,.5)--(0,.66);
\draw(.33,0)--(.33,1);
\draw(.66,0)--(.66,1);
\draw(0,.5)--(.66,.5);
\end{tikzpicture}
\end{minipage}
-
 \begin{minipage}{1.1cm}
\begin{tikzpicture}[thick,scale=1]
\draw(0,0)--(1,0)--(1,1)--(0,1)--(0,0);
\fill[brown,opacity=.3] (0,0) rectangle (1,1);
\draw(0,.5)--(0,.66);
\draw(.33,0)--(.33,1);
\draw(.66,0)--(.66,1);
\draw(.66,.5)--(1,.5);
\end{tikzpicture}
\end{minipage}\,.
$$
For example, the sign of the first term can be calculated as follows. Let $x,x',y,y'$ be the coordinates of the $4$ walls in \begin{minipage}{1.1cm}
\begin{tikzpicture}[thick,scale=1]
\draw(0,0)--(1,0)--(1,1)--(0,1)--(0,0);
\fill[brown,opacity=.3] (0,0) rectangle (1,1);
\draw(0,.5)--(0,.66);
\draw(.33,0)--(.33,1);
\draw(.66,0)--(.66,1);
\draw(.66,.66)--(1,.66);
\draw(0,.5)--(.66,.5);
\end{tikzpicture}
\end{minipage}. To get the first term we can let $y$ increase upward to $y'$. The outward normal is then $dy$, so since $dx\wedge dx'\wedge dy\wedge dy'=dy\wedge dx\wedge dx'\wedge dy'$, the induced orientation is $dx\wedge dx'\wedge dy'$ which matches the canonical orientation of this cell.

\subsection{Proof of Theorem~\ref{thm:wedge}}
We define a discrete vector field in the sense of Forman \cite{forman} on the complex $\TT{n}$. To briefly review and establish notation, this is defined to be a collection of pairs of cells $(\alpha,\beta)$, called \emph{vectors}, where $\alpha$ is a codimension $1$ face of $\beta$ such that  $\partial \beta=\pm \alpha+\cdots$ in the cellular homology chain complex.
Every cell of $\TT{n}$ is allowed to appear in at most $1$ pair. Furthermore, we need the vector field to be a gradient field, which is defined to mean that no chain $\alpha_1,\beta_1,\alpha_2,\beta_2,\alpha_3,\beta_3,\ldots$ can loop back on itself, where each $(\alpha_i,\beta_i)$ is a vector from the vector field, and $\alpha_{i+1}$ is a cell in the boundary of $\beta_i$ distinct from $\alpha_i$, with nonzero multiplicity. (That is, $\partial \beta_{i}=r\alpha_{i+1}+\cdots$ where $r\neq 0$.) The critical cells are defined to be those that do not appear in any vector in the vector field. Forman's theorem implies that $\TT{n}$ is homotopy equivalent to a complex which has cells in $1-1$ correspondence with the critical cells.

We will define a gradient vector field which has a single critical $0$-cell, some number of critical $(n-1)$-cells, and no other critical cells. In other words every cell  representing a rectangulation with $m$ tiles, $1<m<n$ appears in some vector $(\alpha,\beta)$ of the vector field. This will establish the theorem, since it implies the complex is homotopy equivalent to a complex with only a $0$-cell and several $(n-1)$-cells.

We define the vector field as follows. Given a rectangulation $R$, define $\rec R$ to be the rectangulation with a new long thin box added on the left of the ambient square.
For example $\rec  \left(\begin{minipage}{.22in}
    \begin{tikzpicture} [thick]
    \fill[brown,opacity=.3] (0,0) rectangle (.5,.5);
\draw (0,0) -- (.5,0)--(.5,.5)--(0,.5)--(0,0);
\draw (0,.25)--(.5,.25);
\end{tikzpicture}
\end{minipage} \right)= \begin{minipage}{.22in}
    \begin{tikzpicture} [thick]
\draw (0,0) -- (.5,0)--(.5,.5)--(0,.5)--(0,0);
 \fill[brown, opacity=.3] (0,0) rectangle (.5,.5);
\draw (.25,.25)--(.5,.25);
\draw(.25,0)--(.25,.5);
\end{tikzpicture}
\end{minipage}$.
 Every nontrivial rectangulation $R$ can be uniquely written $\rec^k S$ for some  $S$ such that $S\neq \rec T$ for any $T$. 
By convention we think of the trivial rectangulation with $1$ tile as $\rec\emptyset$, although $\emptyset$ does not correspond to a rectangulation in $\TT{n}$.
  Create a vector field by forming all possible pairs $(\rec ^{2i} S, \rec^{2i+1}S)$. 
With our orientation conventions $\rec^{2i}S$ appears with coefficient $-1$ in $\partial  \rec^{2i+1}S$ because there are $2i+1$ different terms in the boundary that correspond to $\rec ^{2i} S$ which mostly cancel. 
 (On the other hand notice that $\rec^{2i-1}S$ appears with coefficient $0$ in $\partial \rec^{2i}S$.) By design these pairs do not overlap at all. To see there are no closed  loops, notice that in a chain $\alpha_1,\beta_1,\alpha_2,\beta_2$, we must have $\alpha_1=\rec^{2i}S$, $\beta_1=\rec^{2i+1}S$. But then $\alpha_2=\rec^{2i+1}S'$ for some $S'$, because $$\partial\rec^{2i+1}S=- \rec^{2i}S+\sum_{i}r_i\rec^{2i+1}S_i,$$ for some scalars $r_i$.
Since $\alpha_2$ is the first in a pair, $S'=\rec^{2\ell+1}T$ for some $\ell$ and $T$. In particular $\alpha_2$ has more leading rectangles than $\alpha_1$. Since the number of leading rectangles strictly increases along gradient paths, there can be no closed loops.

The critical cells for this vector field consist of the trivial rectangulation, which is the unique $0$-cell, as well as cells corresponding to rectangulations with $n$ tiles that are not of the form $\rec^{2i+1} S$. These are of dimension $n-1$ minus the number of singular vertices. Our next task is to extend the previous vector field to a vector field that includes all singular rectangulations (i.e. rectangulations with at least one singular vertex.) Let $\mathcal C$ be the set of rectangulations corresponding to critical cells from the previous vector field, and define a map $\Delta\colon \mathcal C\to\mathcal C\cup\{0\}$ as follows.  Given a rectangulation $R$, find all positions in the tiling of the form \plussign
or \leftplus.  Consider all the locations which are \emph{furthest right} in the sense that they are geometrically furthest to the right in some tiling realizing $R$. Among all such locations, it is an exercise to show that in every tiling realizing $R$, these locations have the same linear ordering by height, so that we can choose the unique position closest to the top.
 If this position is of the form \leftplus, then we define $\Delta(R)=0$. If it is of the form \plussign then define $\Delta(R)$ to be the rectangulation where this position is changed  to \leftplus. Now the vector field extension is defined to consist of all pairs $(R,\Delta(R))$ where $\Delta(R)\neq 0$, and $e^R$ is critical for the previous vector field. (Which implies that $e^{\Delta(R)}$ is also.) All of these pairs are disjoint since if the upper right instance is \plussign, then it is the first coordinate of a pair, and if it is  \leftplus, then it is the second coordinate of a pair. Similarly, if $\Delta(R)\neq 0$ then $\partial \Delta(R)=\pm R+\cdots$, since $R$ appears only once on the boundary of the cell $\Delta(R)$.
 All singular rectangulations appear either as the first or second coordinate of a pair, so the critical cells are either the unique $0$-cell or are $(n-1)$-dimensional.  Finally we argue there are no closed gradient loops in the combined vector field. We claim that a gradient loop cannot contain any pairs $(\rec^{2i}S,\rec^{2i+1}S)$. If we have $\alpha_2=\rec^{2j}S'$, then $j>i$ and $\beta_2=\rec^{2j+1}S''$. If we have $\alpha_2=\rec^{2j+1}S'$, then this is a contradiction since such rectangulations are always the second coordinate of a vector. So if a gradient path contains a pair $(\rec^{2i}S,\rec^{2i+1}S)$, then all subsequent pairs are of this form, and so by the previous argument, there is no closed loop. So now we can concentrate on pairs $(R,\Delta(R))$ only. 

Notice that $\Delta$ preserves the number of vertical walls, and $\partial$ cannot increase the number. (Terms in the boundary are gotten by collapsing one or more edges to points.)
Thus  a closed gradient loop must have a constant number of vertical walls for every $\alpha_i$ and $\beta_i$. Also, once $\Delta$ operates on a given vertical wall it can never operate on one below it or to the left. So in a loop, it must operate on a single vertical wall. Similarly, the number of edges meeting the wall from the left and right must be constant since $\Delta$ preserves this number and $\partial$ cannot increase it.  

The two types of boundary terms that fix the number of vertical walls and the number of edges meeting those walls from left and right are recorded by the following moves
 $\beta_i\to\alpha_{i+1}$:
$$ \text{\leftplus}\mapsto \text{\plussign}\text{\hspace{3em}}\text{\rightplus}\mapsto\text{\plussign}$$
 However the second move can never be part of a loop, since the left edge starts out below the right edge, and $\Delta$ cannot reverse their order. Hence only the first move is possible in a loop. So consider $\alpha_1,\beta_1,\alpha_2$ where $\alpha_2$ is obtained from $\beta_1$ by the first move above. Then it must have operated on a site below the one that changed from $\alpha_1$ to $\beta_1$, so that \leftplus is its top instance. Thus $\Delta(\alpha_2)=0$ and it cannot be the first cell in a pair. This completes the proof of Theorem~\ref{thm:wedge}.

\section{Symmetric tiles and a mod 2 counting conjecture}\label{sec:symmetry}
Let ${\sf T}_n$ be the set of rectangulations with exactly $n$ tiles. The dihedral group of $8$ elements $D_8$ acts on both ${\sf T}_n$ and $\mathbb T_n$ as the automorphism group of the ambient square. Let $s_n$ be the number of rectangulations fixed by this action. That is, $s_n$ counts the totally symmetric rectangulations.
The next lemma assures us that the fixed points of the action can be realized geometrically.
\begin{lemma}
Any rectangulation which is fixed by the $D_8$ action on ${\sf T}_n$ has a representative by a tiling which is fixed by the $D_8$ action on $\mathbb T_n$.
\end{lemma}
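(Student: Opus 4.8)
The plan is to run an averaging (barycentre) argument inside the convex cell that Proposition~\ref{prop:cell} attaches to $R$. Recall that a rectangulation $R$ with $k$ vertices determines an open cell $e^R\subset\TT{n}$, and that the map $\Phi$ of that proof identifies $e^R$ with the relatively open convex polytope $P^R\subset\mathbb R^{2k}$ cut out by the edge equalities and inequalities. The geometric $D_8$-action on the ambient square induces a genuine action of $D_8$ on the space $\TT{n}$, and the hypothesis that $R$ is fixed by the induced action on ${\sf T}_n$ says precisely that $g\cdot e^R=e^{g\cdot R}=e^R$ for every $g\in D_8$. Hence $D_8$ preserves $e^R$, and transporting the action through the homeomorphism $\Phi$ yields a genuine action of $D_8$ on the convex set $P^R$ by the maps $\tilde g=\Phi\circ g\circ\Phi^{-1}$.

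The key point I would establish is that each $\tilde g$ is an \emph{affine} map of $P^R$. Unwinding $\Phi$, a point $\zeta\in P^R$ records the positions of the $k$ labelled vertices of a tiling $J_\zeta$ of type $R$; applying the symmetry $g$ moves each vertex by the rigid motion $g$ of the square, and reading the result back in the fixed labelling of $R$ amounts to permuting the coordinate blocks of $\mathbb R^{2k}$ by the vertex permutation $\sigma_g$ induced by the combinatorial automorphism that realizes $g\cdot R\cong R$. Crucially, $\sigma_g$ is constant across the cell, since every tiling in $e^R$ carries the same combinatorial structure; thus $\tilde g(\zeta)_j=g\bigl(\zeta_{\sigma_g^{-1}(j)}\bigr)$ is the composition of a fixed coordinate permutation with the fixed rigid motion $g$ applied blockwise, hence affine. (Choosing coordinates centred at the midpoint of the square makes each $\tilde g$ linear, which is convenient but inessential.)

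With affineness in hand, the conclusion is the standard fixed-point-by-averaging trick. Pick any $p\in P^R$ and set $q=\tfrac{1}{8}\sum_{g\in D_8}\tilde g(p)$. Each $\tilde g(p)$ lies in $P^R$ because $D_8$ preserves $P^R$, and since $P^R$ is convex the convex combination $q$ again lies in $P^R$; in particular $q$ lies in the \emph{open} cell, so $J_q:=\Phi^{-1}(q)$ is a genuine tiling of type $R$ rather than a degeneration with fewer tiles. Because each $\tilde h$ is affine and $\{\tilde g\}$ is a group action, reindexing gives
$$\tilde h(q)=\tfrac{1}{8}\sum_{g\in D_8}\tilde h\tilde g(p)=\tfrac{1}{8}\sum_{g\in D_8}\widetilde{hg}(p)=q$$
for every $h\in D_8$. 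Translating back through $\Phi$, this says $h\cdot J_q=J_q$ for all $h$, so $J_q$ is a representative of $R$ that is genuinely fixed by the geometric $D_8$-action on $\TT{n}$, as required.

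The main obstacle is the affineness claim, or more precisely making the induced action on $P^R$ fully precise: one must check that the relabelling of vertices caused by $g$ is by a single permutation $\sigma_g$ independent of the chosen realization, and that $g\mapsto\tilde g$ is an honest homomorphism despite the possible presence of nontrivial combinatorial automorphisms of $R$. Both follow from the local constancy of the combinatorial type on $e^R$ together with the fact that $\tilde g$ is obtained by transporting a genuine group action through the bijection $\Phi$; everything afterwards—convexity of $P^R$, its invariance under each $\tilde g$, and the averaging computation—is routine.
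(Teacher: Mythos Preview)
Your argument is correct and follows a genuinely different route from the paper. You exploit the \emph{affine} nature of the induced $D_8$-action on the convex polytope $P^R$ and take the barycentre of an orbit; this produces in one stroke a point of the open cell fixed by the whole group. The paper instead works with the compact truncation $e^R_\epsilon$ and applies Brouwer's fixed-point theorem three separate times, climbing a chain of subgroups: first the horizontal reflection, then the vertical reflection on the resulting space of half-tilings, and finally the diagonal reflection on quarter-tilings. Your approach is cleaner and generalises verbatim to any finite group acting by combinatorial symmetries; the paper's approach has the mild advantage of using only continuity of the action rather than its affineness, but pays for this by needing the iterated restriction to successive fixed-point loci. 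The one point you correctly flag as the crux---that the vertex relabelling $\sigma_g$ is a single permutation independent of the realisation---is implicit in the paper's construction of $\Phi$ (it is what makes ``the $i$th vertex of $R$ in the realisation $J$'' well-defined) and can be justified via the total order on walls described after Proposition~\ref{prop:cell}, which any corner-fixing, direction-preserving self-homeomorphism must respect.
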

\begin{proof}
By Proposition~\ref{prop:cell}, the set of tilings representing a rectangulation $R$ forms a cell $e^R$. Let $e_\epsilon^R$ be the set of all tilings in $e^R$ where every edge has length $\geq \epsilon$. This is nonempty and contractible  for $\epsilon$ sufficiently small, as it represents a convex subpolyhedron of the polyhedron $P^R$ mentioned in the proof of Proposition~\ref{prop:cell}. Note that $D_8$ acts on $e_\epsilon^R$. 
If $\sigma\in D_8$, $\sigma$ acts on $e^R_\epsilon$, and has a fixed point by Brouwer's fixed point theorem. Taking $\sigma$ to be reflection through a horizontal bisector of the square, $R$ can be realized by a tiling which is symmetric about this horizontal bisector. Thus it is gotten by taking a half-tiling from the top of the square and reflecting it into the bottom of the square. Now, by a similar argument to that which shows $e^R_\epsilon$ is a contractible, the set of all half-tilings of the upper square with edge lengths $\geq \epsilon$ is contractible. This is acted on by the reflection through the vertical line, and this has a fixed point. So $R$ is represented by a tiling which is created by the $\z\times\z\subset D_8$ action applied to a configuration in the upper-right quadrant of the square. Finally, reflection through the SW-NE diagonal acts on the space of all such  tilings of this  quadrant with edges of length $\geq \epsilon$. A fixed point of this reflection is a totally symmetric tiling.
\end{proof}

\begin{lemma}
$s_n\equiv t_n\!\!\mod 2$
\end{lemma}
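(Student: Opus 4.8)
The plan is to exploit the fact that $D_8$ is a $2$-group and invoke the standard fixed-point congruence for actions of $p$-groups. Concretely, $|D_8| = 8 = 2^3$, and $D_8$ acts on the finite set ${\sf T}_n$ of rectangulations with exactly $n$ tiles, where $|{\sf T}_n| = t_n$ and the fixed-point set ${\sf T}_n^{D_8}$ has cardinality $s_n$ by the definition of totally symmetric rectangulations.

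First I would decompose ${\sf T}_n$ into its $D_8$-orbits. By the orbit-stabilizer theorem, the size of each orbit divides $|D_8| = 8$, so each orbit size lies in $\{1,2,4,8\}$ and in particular is a power of $2$. The orbits of size $1$ are precisely the rectangulations fixed by every element of $D_8$, i.e.\ the totally symmetric ones, and there are $s_n$ of these. Every orbit of size greater than $1$ has even size. Summing orbit sizes gives $t_n = s_n + \sum_{|\mathcal O| > 1} |\mathcal O|$, where every term of the sum is even, so reducing modulo $2$ yields $t_n \equiv s_n \pmod 2$.

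The only points requiring care are bookkeeping rather than genuine obstacles. One must confirm that $D_8$ genuinely acts on the combinatorial set ${\sf T}_n$: it does, since $D_8$ is the automorphism group of the ambient square, it permutes tilings, and it respects the combinatorial equivalence relation, so the action descends to rectangulations. One must also confirm that the fixed points of this action are exactly the totally symmetric rectangulations counted by $s_n$, which is immediate from the definitions. Note that no appeal to the geometric realization of the previous lemma is needed for the congruence itself; that lemma is relevant only for interpreting $s_n$ via an actual symmetric tiling, whereas here the argument is purely a parity count of orbits of a $2$-group action.
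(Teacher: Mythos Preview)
Your proof is correct and is essentially the same argument as the paper's: both decompose ${\sf T}_n$ into $D_8$-orbits and observe that every non-singleton orbit has even size, so $t_n$ and $s_n$ agree modulo $2$. The paper states this in one sentence, while you spell out the orbit--stabilizer reasoning and the $2$-group fixed-point congruence explicitly, but the underlying idea is identical.
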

\begin{proof}
The orbits of the $D_8$ action on ${\sf T}_n$ have an even number of elements except for the singleton orbits.
\end{proof}

\begin{lemma}\label{lem:4k}
A totally symmetric tiling has either $4k$ tiles or $4k+1$ tiles.
\end{lemma}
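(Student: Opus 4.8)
The plan is to let $D_8$ act on the set of $n$ tiles of a totally symmetric tiling and to read off the possible values of $n\bmod 4$ from the orbit sizes via the orbit--stabilizer theorem. By the previous lemma we may fix a genuinely symmetric geometric representative, so that $D_8$ acts by isometries of the ambient square fixing its center $O$; this is what lets us speak meaningfully of where tiles sit relative to $O$. Every orbit then has size dividing $8$, so a priori its size is $1$, $2$, $4$, or $8$, and $n$ is the sum of these orbit sizes.

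First I would dispose of the central tile. Any tile $T$ whose stabilizer contains the $180^\circ$ rotation $r^2$ is invariant under $r^2$, hence is centered at $O$ and contains $O$. Since at most one tile can contain the single point $O$, there is at most one such $T$; and because every element of $D_8$ fixes $O$, it sends $T$ to a tile containing $O$, which must be $T$ itself. Thus the orbit of $T$ is a singleton, so $T$ is fixed by all of $D_8$, in particular by the $90^\circ$ rotation, which forces $T$ to be a square. Therefore there is at most one orbit of size $1$, namely a single central square, contributing $0$ or $1$ to $n$.

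Next I would rule out orbits of size $2$. An orbit of size $2$ corresponds to a stabilizer of order $4$; but every order-$4$ subgroup of $D_8$ (the cyclic group of rotations and the two Klein four-groups) contains $r^2$. By the previous paragraph a tile fixed by $r^2$ has orbit size $1$, a contradiction. Hence no orbit has size $2$, and every non-central tile lies in an orbit of size $4$ or $8$, with stabilizer a reflection subgroup $\{1,s\}$ or the trivial group. Consequently the number of non-central tiles is divisible by $4$, and $n$ is that multiple of $4$ plus the $0$ or $1$ from the central square; that is, $n=4k$ or $n=4k+1$.

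I expect the main obstacle to be the geometric bookkeeping around $O$: justifying that a tile fixed by $r^2$ must contain $O$, that at most one tile does, and that a tile fixed by the full group must be a square. Passing to the isometric representative furnished by the preceding lemma makes each of these facts immediate, while the claim that all order-$4$ subgroups of $D_8$ contain $r^2$ is elementary group theory. Once these are in place, the orbit-counting argument closes quickly.
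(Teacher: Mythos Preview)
Your proof is correct and follows essentially the same approach as the paper: let $D_8$ act on the set of tiles, observe that every orbit has size $1$, $4$, or $8$, and that an orbit of size $1$ occurs only for a central tile (of which there is at most one). The paper states this tersely without justifying the exclusion of orbits of size $2$; your argument via ``every order-$4$ subgroup of $D_8$ contains $r^2$'' fills in precisely that detail, so your proof is a more careful version of the same idea rather than a different route.
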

\begin{proof}
Since the tiling is totally symmetric, $D_8$ acts on the rectangles within the tiling.
The orbit of a tile under the $D_8$ action has either $1$, $4$, or $8$ elements. It has 1 element if and only if the tile contains the square's center in its interior. 
\end{proof}
\begin{proposition}
$s_n=0$ unless $n=4k$ or $n=4k+1$. Furthermore $s_{4k+1}=s_{4k+4}$.
\end{proposition}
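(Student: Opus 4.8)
The first assertion is immediate from Lemma~\ref{lem:4k}: since every totally symmetric tiling has $4k$ or $4k+1$ tiles, there are no totally symmetric rectangulations with $n\equiv 2$ or $3\pmod 4$, whence $s_n=0$ for such $n$. The content of the proposition is therefore the equality $s_{4k+1}=s_{4(k+1)}$, and I would prove it by exhibiting an explicit bijection between the totally symmetric rectangulations with $4k+1$ tiles and those with $4k+4$ tiles.

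The plan for the bijection is as follows. By the realization lemma at the start of this section, each totally symmetric rectangulation is represented by a genuinely $D_8$-symmetric tiling $J$. I would first pin down the structure at the center $o$ of the square, the unique $D_8$-fixed point. Since $o$ cannot lie in the relative interior of an edge (the $90^\circ$ rotation would carry a horizontal edge through $o$ to a vertical one, forcing $o$ to be a crossing), exactly two cases occur: either $o$ lies in the interior of a tile, or $o$ is a vertex. In the first case that tile is $D_8$-invariant, hence a square centered at $o$; it is the unique fixed tile (any fixed tile is centrally symmetric about $o$ and so contains $o$), all other tiles lie in orbits of size $4$ or $8$, and the count is $\equiv 1\pmod 4$. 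In the second case $o$ is $3$- or $4$-valent; a $3$-valent (T-shaped) vertex is not $D_8$-invariant since the $90^\circ$ rotation does not preserve its triple of edge-directions, so $o$ is a $4$-valent singular vertex, the four tiles meeting there form a single orbit of size $4$, and the count is $\equiv 0\pmod 4$. This dichotomy matches $4k+1$ versus $4k+4$ tiles.

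The key geometric step is to show the four tiles around the central crossing assemble into one square. The stabilizer of the northeast tile $Q$ in $D_8$ has order two and contains the reflection across the SW--NE diagonal through $o$; since $o$ is the SW corner of $Q$, invariance under this reflection forces $Q$ to be a square $[0,a]^2$ in coordinates centered at $o$. Its three $D_8$-images fill the remaining quadrant-squares, so the four central tiles tile exactly the centered square $[-a,a]^2$ with no hidden subdivisions. I would then define two inverse operations: \emph{merge}, which deletes the central cross and replaces the four central squares by the single tile $[-a,a]^2$ (lowering the count by $3$), and \emph{split}, which bisects the central square tile horizontally and vertically (raising the count by $3$). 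Both operations alter the tiling only inside $[-a,a]^2$, leaving the attaching data along its boundary untouched, so they preserve $D_8$-symmetry and carry rectangulations to rectangulations; being manifestly inverse to one another, they yield the desired bijection and hence $s_{4k+1}=s_{4k+4}$.

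The main obstacle I anticipate is the geometric rigidity step: proving that the central tile (resp.\ the union of the four central tiles) really is a centered square containing no further edges inside $[-a,a]^2$, and that merge and split descend to well-defined, mutually inverse maps on combinatorial equivalence classes rather than on chosen representatives. Working with the canonical $D_8$-symmetric representative furnished by the realization lemma, together with the observation that ``the unique $D_8$-fixed tile'' and ``the unique $D_8$-fixed singular vertex'' are combinatorial invariants, should make the operations canonical and settle well-definedness.
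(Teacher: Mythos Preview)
Your argument is correct and is essentially the same as the paper's: the first assertion is deduced from Lemma~\ref{lem:4k}, and the bijection $s_{4k+1}=s_{4k+4}$ is exactly the split/merge of the central square into four squares that the paper states in a single sentence. Your added analysis of the center (central square tile versus singular vertex with four congruent square tiles) and the verification that split and merge are well-defined inverse maps on rectangulations simply flesh out what the paper leaves implicit.
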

\begin{proof}
The first statement follows from Lemma~\ref{lem:4k}.
The bijection corresponding to $s_{4k+1}=s_{4k+4}$ is given by subdividing the central square into $4$ squares. 
\end{proof}
The computed sequence $t_n$ indeed obeys these equations modulo $2$, which acts as a check on our work. Indeed, this sequence appears to satisfy the even stronger property that the number of tilings is even unless $n=8k+1$ or $8k+4$ in which case it is odd. Here is $t_n\!\!\mod 2$, for $1\leq n\leq 28$.
$$1, 0, 0, 1, 0, 0, 0, 0, 1, 0, 0, 1, 0, 0, 0, 0, 1, 0, 0, 1, 0, 0, 0, 0, 1, 0, 0, 1,\ldots$$
Recall Conjecture~\ref{conj1} from the introduction.
\begin{conjecturethree}
 $t_n\equiv 1\!\!\mod 2$ if $n=8k+1$ or $n=8k+4$. Otherwise $t_n\equiv 0\!\!\mod 2$.  
\end{conjecturethree}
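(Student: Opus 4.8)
The plan is to build on the reduction $t_n\equiv s_n\pmod 2$ already in hand and to strip away the symmetry one reflection at a time, trading the unwieldy count of totally symmetric rectangulations for the count of rectangulations invariant under a single reflection. Write $D_8=\langle h,v,d\rangle$, where $h,v$ are the reflections across the horizontal and vertical bisectors and $d$ is a diagonal reflection, so that $G:=\langle h,v\rangle\cong\z\times\z$ and $d h d^{-1}=v$. Let $X_n$ be the number of $G$-symmetric rectangulations and $H_n$ the number of rectangulations fixed by $h$ alone, each with $n$ tiles. Since $h$ and $v$ commute, $v$ acts on the set of $h$-symmetric rectangulations with fixed set exactly the $G$-symmetric ones; and since $d$ conjugates $G$ onto itself, $d$ acts on the set of $G$-symmetric rectangulations with fixed set exactly the $D_8$-symmetric ones. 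In each case the non-fixed points fall into $2$-element orbits, so
$$t_n\equiv s_n\equiv X_n\equiv H_n\pmod 2.$$
It therefore suffices to determine the parity of $H_n$, a far more tractable object because only one axis is involved.

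Next I would give a closed description of $H_n$. An $h$-symmetric rectangulation is reconstructed uniquely from (i) the rectangulation $P$ obtained by restricting to the top half-rectangle, together with (ii) a choice, for each tile of $P$ whose lower edge lies on the axis, of whether that tile straddles the axis (becoming a single symmetric tile) or meets its mirror image along the axis (a reflected pair). If $P$ has $m$ tiles and (after rotating the axis side to the right) is $r$-heavy, then it has $r+1$ tiles meeting the axis, and designating $2m-n$ of them as straddling produces an $h$-symmetric rectangulation with $n$ tiles. Hence, with $u_{m,r}:=\sum_s t_{m,r,s}$ the number of $r$-heavy rectangulations with $m$ tiles,
$$H_n=\sum_{m,r}u_{m,r}\binom{r+1}{2m-n}.$$
One checks $H_1=1$, $H_2=2$, $H_4=9$, matching $t_1,t_2,t_4\bmod 2$. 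Combining this with the facts that $s_n=0$ for $n\not\equiv0,1\pmod 4$ and $s_{4k+1}=s_{4k+4}$ (so $s_{4m}\equiv s_{4(m-1)+1}$), the entire conjecture collapses to the single assertion that $H_{4m+1}$ is odd precisely when $m$ is even.

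The last step I would attack through Lucas' theorem. Modulo $2$ the weight $\binom{r+1}{2m-n}$ is nonzero only when the binary digits of $2m-n$ are dominated by those of $r+1$, and in the recursion of Theorem~\ref{thm:recursion} the sign $(-1)^{c+1}$ is irrelevant. Moreover a generating-function computation shows that the sum over the singular-vertex index collapses modulo $2$: writing $N=\Delta m-c+\ell-1$ one has $\sum_{\Delta s}\binom{\ell-c}{\Delta s}\binom{\Delta m-c-\Delta s+\ell-1}{\ell-1}\equiv[y^{\ell-1}](1+y)^{N-(\ell-c)}y^{\ell-c}\equiv\binom{\Delta m-1}{c-1}\pmod2$, so that $u_{m,r}$ satisfies the cleaner three-binomial recursion
$$u_{m,r}\equiv\sum_{\bar m,\bar r,c}\binom{\ell-1}{c-1}\binom{\bar r+2-\ell}{c}\binom{\Delta m-1}{c-1}\,u_{\bar m,\bar r}\pmod2,\qquad \ell=\Delta m-\Delta r.$$
The crux -- and, I expect, the reason the conjecture remains open -- is to extract the exact period-$8$ behavior from this data. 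Even with the clean formula for $H_n$ and the simplified mod-$2$ recursion for $u_{m,r}$, the nested Lucas conditions do not obviously organize into a closed form, and I anticipate that pinning down $u_{m,r}\bmod 2$ will require a genuinely new structural or bijective input -- plausibly the ``unseen structure'' alluded to in the introduction, analogous to Reading's $2$-clumped permutations -- rather than routine manipulation of the recursion.
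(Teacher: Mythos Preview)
This statement is an open conjecture in the paper: the authors say explicitly that they have been unable to prove it, and the paper offers only the partial results $t_n\equiv s_n\pmod 2$, $s_n=0$ for $n\not\equiv 0,1\pmod 4$, and $s_{4k+1}=s_{4k+4}$, together with direct verification for small $n$ by enumerating totally symmetric rectangulations. There is no proof in the paper to compare your proposal against.

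Your proposal is likewise not a proof, as you yourself acknowledge in the final paragraph. The chain $t_n\equiv s_n\equiv X_n\equiv H_n\pmod 2$ is correct (indeed $t_n\equiv H_n$ already follows in one step from the order-$2$ action of $\langle h\rangle$), your formula $H_n=\sum_{m,r}u_{m,r}\binom{r+1}{2m-n}$ is correct, and the mod-$2$ collapse of the $\Delta s$-sum in the recursion to $\binom{\Delta m-1}{c-1}$ is a valid generating-function calculation. These steps genuinely go further than the paper's partial results: the paper stays with the full $D_8$-fixed count $s_n$, whereas you pass to a single reflection and produce an explicit expression for $H_n$ in terms of the $r$-heavy counts $u_{m,r}$, which is a more tractable-looking object than $s_n$.

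But the essential gap is exactly where you locate it: neither your simplified mod-$2$ recursion for $u_{m,r}$ nor the formula for $H_n$ actually yields the parity of $H_{4m+1}$, and without that the conjecture remains open. So your proposal amounts to a correct and strictly stronger package of partial results than the paper's, pursued along a somewhat different (single-reflection rather than full-dihedral) route, but it does not settle the statement.
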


Conjecture~\ref{conj1} can be independently verified for small $n$ by directly counting symmetric configurations. Every symmetric tiling is determined by what it looks like in a triangular fundamental domain for the $D_8$ action, depicted in grey in the following picture: \begin{minipage}{.45in}\begin{tikzpicture}[thick]
\draw (0,0)-|(1,1)-|(0,0);
\fill[brown,opacity=.3] (0,0) rectangle (1,1);
\fill[gray](.5,.5)--(1,.5)--(1,1)--(.5,.5);
\end{tikzpicture}\end{minipage}. So we study the possible configurations when restricted to this triangle. It is clear that they must look like
\begin{minipage}{1.1cm}
\begin{tikzpicture}[thick,scale=.66]
\fill[brown,opacity=.3] (0,0)--(1.5,1.5)--(1.5,0);
\draw (0,0)--(1.5,1.5)--(1.5,0);
\draw[densely dashed](0,0)--(.5,0);
\draw[fill=gray] (.5,0)--(.5,.5)--(1,.5)--(1,1)--(1.5,1)--(1.5,0);
\end{tikzpicture}
\end{minipage}
where the grey region is a rectangular tiling, and there are some number of ``sawteeth" that hit the diagonal. The dashed edge may or may not be there, and accounts for the equality $s_{4k+1}=s_{4k+4}$. So for example, here is a count of the symmetric tilings by $17$ rectangles.
\begin{center}
\begin{tikzpicture}[thick]
\draw(0,0)--(2,2)--(2,0);
\fill[brown,opacity=.3] (0,0)--(2,2)--(2,0);
\draw(1,0)--(1,1)--(2,1);
\draw(1.33,0)--(1.33,1);
\draw(1.67,0)--(1.67,1);
\draw (.67,.33) node {$1$};
\draw (1.17,.5) node {$4$};
\draw (1.5,.5) node {$4$};
\draw (2-.17,.5) node {$4$};
\draw (1.67,1.33) node {$4$};
\end{tikzpicture},\hspace{.5cm}
\begin{tikzpicture}[thick]
\draw(0,0)--(2,2)--(2,0);
\fill[brown,opacity=.3] (0,0)--(2,2)--(2,0);
\draw(1,0)--(1,1)--(2,1);
\draw(2,0)--(1.5,0)--(1.5,1);
\draw (.67,.33) node {$1$};
\draw (1.25,.5) node {$4$};
\draw (1.75,.5) node {$8$};
\draw (1.67,1.33) node {$4$};
\end{tikzpicture},\hspace{.5cm}
\begin{tikzpicture}[thick]
\draw(0,0)--(2,2)--(2,0);
\fill[brown,opacity=.3] (0,0)--(2,2)--(2,0);
\draw(1,0)--(1,1)--(2,1);
\draw(1,0)--(1.5,0)--(1.5,1);
\draw (.67,.33) node {$1$};
\draw (1.25,.5) node {$8$};
\draw (1.75,.5) node {$4$};
\draw (1.67,1.33) node {$4$};
\end{tikzpicture},\hspace{.5cm}
\begin{tikzpicture}[thick]
\draw(0,0)--(2,2)--(2,0);
\fill[brown,opacity=.3] (0,0)--(2,2)--(2,0);
\draw(1,0)--(1,1)--(2,1);
\draw(1,.5)--(2,.5);
\draw (.67,.33) node {$1$};
\draw (1.5,.75) node {$8$};
\draw (1.5,.25) node {$4$};
\draw (1.67,1.33) node {$4$};
\end{tikzpicture},\hspace{.5cm}
\begin{tikzpicture}[thick]
\draw(0,0)--(2,2)--(2,0);
\fill[brown,opacity=.3] (0,0)--(2,2)--(2,0);
\draw(.67,0)--(.67,.67)--(1.33,.67)--(1.33,1.33)--(2,1.33);
\draw(1.33,.67)--(1.33,0);
\draw (.44,.22) node {$4$};
\draw (1.11,.89) node{$4$};
\draw (1.78,1.56) node{$4$};
\draw (1,.33) node {$4$};
\draw (1.67,.67) node {$4$};
\end{tikzpicture},
\end{center}
corresponding to rectangulations
\begin{center}
\begin{tikzpicture}[thick,scale=2]
\pgfsetfillopacity{0.3}
\draw[fill=brown](0,0)--(0,1)--(1,1)--(1,0)--(0,0);
\draw(.25,0)--(.25,1);
\draw(0,.25)--(1,.25);
\draw(.75,0)--(.75,1);
\draw(0,.75)--(1,.75);
\draw(.25,.083)--(.75,.083);
\draw(.25,.166)--(.75,.166);
\draw(.25,.833)--(.75,.833);
\draw(.25,.916)--(.75,.916);
\draw(.083,.25)--(.083,.75);
\draw(.166,.25)--(.166,.75);
\draw(.833,.25)--(.833,.75);
\draw(.916,.25)--(.916,.75);
\end{tikzpicture},\hspace{.5cm}
\begin{tikzpicture}[thick,scale=2]
\pgfsetfillopacity{0.3}
\draw[fill=brown](0,0)--(0,1)--(1,1)--(1,0)--(0,0);
\draw(.25,0)--(.25,1);
\draw(0,.25)--(1,.25);
\draw(.75,0)--(.75,1);
\draw(0,.75)--(1,.75);
\draw(.25,.125)--(.75,.125);
\draw(.5,0)--(.5,.125);
\draw(.25,.875)--(.75,.875);
\draw(.5,.875)--(.5,1);
\draw(.125,.25)--(.125,.75);
\draw(0,.5)--(.125,.5);
\draw(.875,.25)--(.875,.75);
\draw(.875,.5)--(1,.5);
\end{tikzpicture},\hspace{.5cm}
\begin{tikzpicture}[thick,scale=2]
\pgfsetfillopacity{0.3}
\draw[fill=brown](0,0)--(0,1)--(1,1)--(1,0)--(0,0);
\draw(.25,0)--(.25,1);
\draw(0,.25)--(1,.25);
\draw(.75,0)--(.75,1);
\draw(0,.75)--(1,.75);
\draw(.25,.125)--(.75,.125);
\draw(.5,.25)--(.5,.125);
\draw(.25,.875)--(.75,.875);
\draw(.5,.875)--(.5,.75);
\draw(.125,.25)--(.125,.75);
\draw(.25,.5)--(.125,.5);
\draw(.875,.25)--(.875,.75);
\draw(.875,.5)--(.75,.5);
\end{tikzpicture},\hspace{.5cm}
\begin{tikzpicture}[thick,scale=2]
\pgfsetfillopacity{0.3}
\draw[fill=brown](0,0)--(0,1)--(1,1)--(1,0)--(0,0);
\draw(.25,0)--(.25,1);
\draw(0,.25)--(1,.25);
\draw(.75,0)--(.75,1);
\draw(0,.75)--(1,.75);
\draw(.417,0)--(.417,.25);
\draw(.583,0)--(.583,.25);
\draw(.417,.75)--(.417,1);
\draw(.583,.75)--(.583,1);
\draw(0,.417)--(.25,.417);
\draw(0,.583)--(.25,.583);
\draw(.75,.417)--(1,.417);
\draw(.75,.583)--(1,.583);
\end{tikzpicture},\hspace{.5cm}
\begin{tikzpicture}[thick,scale=2]
\pgfsetfillopacity{0.3}
\draw[fill=brown](0,0)--(0,1)--(1,1)--(1,0)--(0,0);
\draw(.25,0)--(.25,1);
\draw(0,.25)--(1,.25);
\draw(.75,0)--(.75,1);
\draw(0,.75)--(1,.75);
\draw(.417,.25)--(.417,.75);
\draw(.583,.25)--(.573,.75);
\draw(.25,.417)--(.75,.417);
\draw(.25,.583)--(.75,.583);
\end{tikzpicture}.
\end{center}

Here the numbers refer to the number of rectangles in the orbit of a given region, and must add up to $17$. Thus we see that  $s_{17}=5$, which is consistent with our calculation that $t_{17}\equiv 1\!\!\mod 2$.

\end{document}